\title[On the classification of matrix distributions]{%
On the classification problem of matrix distributions of measurable functions in several variables
}
\author{Anatoli M. Vershik}
\author{Ulrich Hab\"ock}
\address{Anatoli M. Vershik, Steklov Inst.of Mathematics, St.Petersburg, Fontanka 27, St.Petersburg, 191023; Math.Dept of St.Petersburg State University;  Russia}\email{vershik@pdmi.ras.ru}
\address{Ulrich Hab\"ock,  Cometence Centre for IT--Security, Fachhochschule Campus Wien, Favoritenstrasse 226, A--1100 Wien,  Austria}\email{ulrich.haboeck@fh-campuswien.ac.at}
\date{\today}
\thanks{The first author supported by the Russian Science Foundation grant \# 14-11-00581.}
\newtheorem{thm}{Theorem}[section]
\newtheorem*{thm*}{Theorem}
\newtheorem{cor}[thm]{Corollary}
\newtheorem{lem}[thm]{Lemma}
\newtheorem{prop}[thm]{Proposition}
\theoremstyle{definition}
\newtheorem{defn}[thm]{Definition}
\newtheorem*{defn*}{Definition}
\theoremstyle{remark}
\newtheorem{rem}[thm]{Remark}
\newtheorem{rem*}[]{Remark}
\newcommand{\comment}[1]{}
\newcommand{\N}{\mathbb N}
\newcommand{\Z}{\mathbb Z}
\newcommand{\R}{\mathbb R}
\newcommand{\mc}{\mathcal}
\newcommand{\mf}{\mathfrak}
\renewcommand{\S}{S}
\DeclareMathOperator{\Aut}{Aut}
\DeclareMathOperator{\id}{id}
\DeclareMathOperator{\meas}{Meas}
\begin{document}
\setlength{\parskip}{2mm}
\maketitle

\begin{center}
To the memory of Michael I. Gordin
\end{center}

\begin{abstract}
We resume the results from \cite{Vershik FA} on the classification of measurable functions in several variables, with some minor corrections of purely technical nature, and give a partial solution to the characterization problem of so--called matrix distributions, which are the metric invariants of measurable functions introduced in \cite{Vershik FA}.
The characterization of these invariants, considered as $\S_\N\times\S_\N$--invariant, ergodic measures on the space of matrices is closely related to Aldous' and Hoover's representation of row-- and column--exchangable distributions \cite{Aldous1981,Hoover1982}, but not in such an obvious way as was initially expected in \cite{Vershik FA}.
\end{abstract}

Key words: Matrix distribution, pure and completely pure function,
invariant measures.

\section{Introduction and outline of the paper}

The classification problem of measurable functions  is the question whether a measurable function
\[
f: X_1\times X_2 \times \ldots\times X_n \longrightarrow Z,
\]
defined in several variables from standard probability spaces
$(X_i,\mf B_i,\mu_i)$ and values in a Borel space $Z$,
 is isomorphic to another such function $h$ with arguments taken from other probability spaces $(Y_i,\mf C_i, \nu_i) $.
The notion of isomorphism refers to the category of measure spaces:
the existence of measure--preserving,  invertible Borel maps
\[
T_i: \left(X_i,\mf B_i,\mu_i\right) \longrightarrow \left(Y_i,\mf C_i, \nu_i\right)
\]
which carry the function $f$ to $h$ by seperate coordinate--wise application.
In terms of commutative diagrams:
the diagram
\[
\begin{CD}
X_1\times X_2\times\ldots\times X_n @>f>> \R\\
@V\text{meas.--pres.}V T= (T_1,T_2,\ldots, T_n)V @VV\text{id}V\\
Y_1\times Y_2 \times\ldots\times Y_n@>h>>\R
\end{CD}
\]
commutes on a set of full measure.
The classical case of functions in one argument was solved by Rokhlin \cite{Rokhlin57}, and is nowadays found in many modern textbooks on measure theory such as \cite{Bogachev07}:
roughly speaking, two functions are isomorphic if and only if the distribution as well as the
multiplicities of the attained values, described by the metric types of the conditional measures $\mu_z(\,\cdot\,)= \mu(\,\cdot\,|f=z)$, coincide (we will give the precise statement in Section~\ref{s:BasicDefinitions}).

When considering the isomorphism problem for functions in several arguments one obviously needs an entirely different concept.
This problem was posed in full generality by the first author in \cite{Vershik FA},  and a first application of the idea of matrix distributions was in the context of classifying metric triples, i.e. Polish spaces with fully supported probability measure, initiated by M. Gromov, cf. \cite{Gromov} and \cite{RandomMetricSpaces}.
The \textit{tensor distribution} $D_f$ of a measurable function $f$ (or \textit{matrix  distribution}  in the case of  two variables only) introduced in \cite{Vershik FA} is a probability measure
on the space of infinite tensors, i.e.
\[
D_f \in \meas_1\left( Z^{\N^n}\right),
\]
which arises as the distribution of the tensors
\[
\left(r_{i_1,i_2,\ldots,i_n}\right)= \left(f\left(x_1^{(i_1)}, x_2^{(i_2)},\ldots, x_n^{(i_n)}\right)\right)_{i_1,i_2,\ldots,i_n=1}^\infty
\]
determined by the $f$--values when the arguments are sampled independently and at random according to the given measures $\mu_i$.
This measure is invariant and ergodic with respect to action of the product
\[
\left(\S_\N\right)^n = \S_\N\times\S_\N\times\ldots\times\S_\N,
\]
of the infinite symmetric group $S_\N$,
acting independently on the indices of the tensors.
It is shown in \cite{Vershik FA} that $D_f$ is a complete metric invariant for the isomorphism problem of measurable functions,  provided
that the functions under consideration are \textit{pure}, which means that they do not admit non--trivial factors in the category of measurable functions (see Section~\ref{s:BasicDefinitions} for a precise statement of that property).


The characterization problem of those invariant measures
\[
\lambda\in \meas_1\left( \R^{\N\times\N}\right)
\]
which are the matrix distribution of a function  $f(x,y)$ in two variables is closely related to
Aldous' and Hoover's representation of exchangeable distributions on infinite arrays in two dimensions \cite{Aldous1981,Hoover1982}:
\textit{any array of random variables $(X_{i,j})$, $1\leq i,j<\infty$, with an $\left(\S_\N\times\S_\N\right)$--invariant joint distribution can be represented as function
\[
X_{i,j} = f\left(\alpha, \zeta_i,\eta_j,\xi_{i,j} \right)
\]
of underlying i.i.d. random variables $\alpha$, $\zeta_i$, $\eta_j$, and $\xi_{i,j}$.
}
The connection of Aldous' theorem with \cite{Vershik FA} consists in the fact that some $(S_\N\times\S_\N)$--invariant ergodic measures correspond to the matrix distribution of a measurable function, which is up to isomorphy unique -- a property that doesn't follow from the approach in \cite{Aldous1981, Hoover1982}.
Recently, the first author proved the same answer for arbitrary invariant ergodic measures which covers Aldous' representation in full generality \cite{Vershik15}, but we shall not touch this topic here.

In the present work we concetrate on the above mentioned partial solution to the characterization problem of matrix distributions, correcting the corresponding statement Theorem 3 in \cite{Vershik FA}:
an $\left(\S_N\right)^n$--invariant ergodic measure $\lambda$ is the matrix distribution of a \textit{completely pure} function, that is a  pure function with trivial \textit{congruence group}
\begin{multline*}
K_f= \left\{ \left(T_i\right)_{i=1}^n \in \prod_{i=1}^n \Aut_0(X_i,\mu_i): \right.
\\
f\left(T_1(x_1) ,\ldots, T_n(x_n)\right)=f(x_1,\ldots, x_n) \quad \text{a.e.} \Bigg\},
\end{multline*}
if and only if it is a \textit{simple measure}, i.e.  the ergodic components of the seperate actions of $\S_\N$ generate the entire sigma algebra in the space of tensors.

The paper is organized as follows:
In Section \ref{s:BasicDefinitions} we recall important definitions and facts from \cite{Vershik FA} and restate Rokhlins classification theorem for univalent functions.
Section \ref{s:PureFunctions} revises basic properties of pure functions from \cite{Vershik FA} including self--contained proofs.
We chose to repeat these elementary facts, as they are needed in Section \ref{s:CompleteInvariant}, in which we present a slightly modified proof of the completeness theorem Theorem 2 from \cite{Vershik FA}.
Finally, Section \ref{s:SimpleMeasures} elaborates the above mentioned partial characterization of matrix distributions via the so--called \textit{general canonical model} for a measurable function.
As the  case of functions in more than two variables bears no additional obstacles from the conceptual point of view, we shall restrict ourselves throughout Sections \ref{s:BasicDefinitions}--\ref{s:SimpleMeasures} to the case of two variables
\[
n=2
\]
only.
The general case, which is then obtained by a straight--forward generalization of our methods, is briefly discussed in Section \ref{s:more than two}.

The present paper is a revised version of a chapter taken form the thesis \cite{HaboeckDiss}, which originated from a discussion on the classification of matrix distributions during the course \textit{Measure theoretic constructions and their applications in ergodic theory, asymptotics, combinatorics, and geometry} given by the first author  in autumn 2002 at the Erwin Schr\"odinger Institute, Vienna.

\subsection*{Acknowledgements}
The second author would like to thank the first author for his endless patience in discussions and correspondence, in particular during his last stage of writing his thesis.


\section{Basic definitions and facts}
\label{s:BasicDefinitions}

Throughout the following we consider all spaces to be standard probability spaces, i.e. standard Borel spaces $(X,\mf B)$ equipped with a Borel probability measure $\mu$.
To avoid cumbersome notation we shall write $(X,\mu)$ (or  just $X$ if it is clear to what measure on $X$ we refer) instead of $(X,\mf B,\mu)$, whenever it is convient.
All functions are considered to be measurable unless the contrary is explicitely stated.

\subsection{Isomorphy, factors, and pure functions}
We call two measurable functions $f:(X,\mu)\times (Y,\nu) \longrightarrow\R$ and $f':(X',\mu')\times (Y',\nu') \longrightarrow\R$ to be \textit{isomorphic} if we can find measure preserving isomorphisms $S:X\longrightarrow X'$ and $T:Y\longrightarrow Y'$ such that
\[
f'\big(S(x),T(y)\big)= f(x,y) \quad\text{a.e.},
\]
where `a.e.' refers to the product measure $\mu\times\nu$.
Whenever the transformations $S$ and $T$ are measure preserving projections (and not necessarily invertible $\bmod\, 0$), i.e. they map onto a set of full measure, we say that $f'$ is a \textit{factor} of the function $f$.

Notice that isomorphy as well as being a factor is a notion on the equivalence classes ($\bmod\, 0$) of functions.
In terms of commutative diagrams, if $f'$ is a factor of $f$, then the diagram
\[
\begin{CD}
X @.\times @. Y 	@>f>>	 Z
\\
@V\text{meas.pres. projections}VSV@.@VVTV 		@VV\text{id}V\\
X' @.\times @. Y' @>f'>>Z
\end{CD}
\]
commutes.

\begin{defn}
\label{d:pure function}
A measurable function $f:(X,\mu)\times (Y,\nu)\longrightarrow Z$ is \textit{pure} if it admits no true factor, by which we mean that every factor $f':(X',\mu')\times (Y',\nu')\longrightarrow Z$ of $f$ is already isomorphic to $f$.
\end{defn}

We denote by $\mf B(X,Z)$ (or $\mf B(Y,Z)$) the space of all equivalence classes $\bmod\, 0$ of measurable functions from $X$ (or $Y$, respectively) into the standard Borel space $Z$, endowed with topology of convergence in measure with respect to any Polish topology generating the Borel structure of $Z$.
Since $f$ is measurable so are the mappings
\begin{align*}
f_X: X\longrightarrow \mf B(Y,Z), \quad x &\mapsto \left[f(x,\,\cdot\,)\right]_\nu
\\\intertext{and}
f_Y: Y\longrightarrow \mf B(X,Z), \quad y &\mapsto \left[f(\,\cdot\,, y)\right]_\mu,
\end{align*}
where $[\:.\:]_\nu$ and  $[\:.\:]_\mu$ denote the corresponding equivalence class.
For brevity, we will omit the brackets in the sequel.

It is evident from Definition \ref{d:pure function} that pureness of a function can be rephrased as follows.
\begin{lem}\label{l:pure}
A function $f:X\times Y\longrightarrow Z$ is pure if and only if both mappings
$f_X: X\longrightarrow \mf B(Y,Z)$, $x\mapsto f(x,\,\cdot\,)$, and $f_Y: Y\longrightarrow \mf B(X,Z)$, $y\mapsto f(\,\cdot\,,y)$ are one--to--one on a set of full measure.
\end{lem}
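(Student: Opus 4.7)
The lemma is an ``if and only if,'' and the two implications have different flavors. The plan is to attack the ``only if'' direction (pureness forces a.e.\ injectivity of $f_X$ and $f_Y$) by contrapositive via an explicit factor construction, and to establish the ``if'' direction by showing that the projections constituting any factor must themselves be essentially invertible.

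For the contrapositive of the ``only if'' direction, suppose that $f_X$ fails to be injective on any set of full $\mu$--measure. On $X$ consider the equivalence relation $x\sim x'$ iff $f_X(x)=f_X(x')$ in $\mf B(Y,Z)$, i.e.\ $f(x,\,\cdot\,)=f(x',\,\cdot\,)$ $\nu$--almost everywhere. Since $\mf B(Y,Z)$ is a standard Borel space and $f_X$ is measurable, we may take $X':=\mf B(Y,Z)$ equipped with the push--forward $\mu':=(f_X)_*\mu$ as a standard probability space, and $S:=f_X$ as a measure--preserving projection $X\to X'$. Defining $f'(\phi,y):=\phi(y)$ by the evaluation map, the diagram
\[
f(x,y) \;=\; f_X(x)(y) \;=\; f'\!\left( S(x), y\right)
\]
commutes a.e., so $f'$ is a factor of $f$. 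By the hypothesis this factor is not isomorphic to $f$, because any measure--preserving isomorphism $X\to X'$ coinciding with $S$ on a set of full measure would contradict the fact that $S$ collapses a set of positive measure. The symmetric construction on $Y$ disposes of $f_Y$.

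For the ``if'' direction, assume both $f_X$ and $f_Y$ are one--to--one almost everywhere, and let $f':(X',\mu')\times(Y',\nu')\to Z$ be an arbitrary factor with measure--preserving projections $S:X\to X'$ and $T:Y\to Y'$ satisfying
\[
f(x,y)=f'\!\left(S(x),T(y)\right) \quad \text{a.e.}\!\!
\]
Whenever $S(x_1)=S(x_2)$, Fubini yields $f(x_1,y)=f'(S(x_1),T(y))=f'(S(x_2),T(y))=f(x_2,y)$ for almost every $y$, that is $f_X(x_1)=f_X(x_2)$ in $\mf B(Y,Z)$. Essential injectivity of $f_X$ then forces $S$ itself to be essentially one--to--one; being already a measure--preserving projection, $S$ is thus an isomorphism $\bmod\,0$. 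Symmetrically, $T$ is an isomorphism $\bmod\,0$, so $f'\cong f$ and $f$ is pure.

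The main subtlety is making the quotient construction of the first direction rigorous inside the standard Borel category --- one has to know that $\mf B(Y,Z)$ really does carry a standard Borel structure compatible with the convergence in measure topology, so that $(f_X(X),\mu')$ qualifies as a standard probability space and the evaluation $f'(\phi,y)=\phi(y)$ is jointly measurable in a suitable realisation. Once this technical point is in place, both implications reduce to the single observation that $f_X$ factors through $S$ and is a.e.\ injective precisely when $S$ is.
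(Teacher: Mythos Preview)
The paper itself offers no proof of this lemma; it simply declares the statement ``evident from Definition~\ref{d:pure function}''.  So there is nothing to compare your approach against, and your write-up is in fact more detailed than what the paper provides.

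Your ``if'' direction is clean and correct.

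Your contrapositive for the ``only if'' direction constructs the right factor, but the sentence justifying that this factor is \emph{not} isomorphic to $f$ is a genuine gap.  You write that ``any measure-preserving isomorphism $X\to X'$ coinciding with $S$ on a set of full measure would contradict the fact that $S$ collapses a set of positive measure.''  This only rules out $S$ itself as the witnessing isomorphism.  But the definition of isomorphy between $f$ and $f'$ allows \emph{any} pair of measure-preserving isomorphisms $S':X\to X'$ and $T':Y\to Y$ with $f'(S'(x),T'(y))=f(x,y)$ a.e.; there is no a priori reason why such an $S'$ must agree with your projection $S$, particularly since $T'$ need not be the identity.

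The fix is short and uses what you have already proved.  By construction $f'_{X'}:X'\to\mf B(Y,Z)$ is the identity map on $X'\subseteq\mf B(Y,Z)$, hence a.e.\ injective.  Now observe that ``$f_X$ is a.e.\ injective'' is an isomorphism invariant: if $f'(S'(x),T'(y))=f(x,y)$ a.e.\ with $S',T'$ invertible, then $f_X(x_1)=f_X(x_2)$ forces $f'_{X'}(S'(x_1))=f'_{X'}(S'(x_2))$ (compose with $T'^{-1}$), whence $S'(x_1)=S'(x_2)$ and so $x_1=x_2$.  (This is exactly the computation from your ``if'' direction.)  Therefore $f'\cong f$ would imply that $f_X$ is a.e.\ injective, contradicting the hypothesis.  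Hence $f'$ is a true factor and $f$ is not pure.
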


\subsection{Rokhlins Theorem}
It is evident that two univalent measurable functions $f:(X,\mu)\longrightarrow Z$ and $f':(X',\mu')\longrightarrow Z$, i.e. one--to--one on a set of full measure, are isomorphic if and only if their distributions $D_{f}= \mu\circ f^{-1}$ and $D_{f'}= \mu'\circ f'^{-1}$ coincide.
If the functions under consideration are not univalent one has to take in account the `multiplicity' certain values are obtained.
This is done by looking at the function
\[
m_f: Z\longrightarrow \Sigma =\Big\{(c_i)_{i\geq 1}: 0\leq c_i \leq 1, \sum_{i=1}^\infty c_i\leq 1 \Big\}
\]
which maps any value $z$ to the \emph{metric type} $m_f(z)$ of the conditional probability distribution
\[
\mu_z(\,\cdot\,)= \mu(\,\cdot\,|f=z),
\]
which is the sequence of weights $\big\{c_i=\mu_z(\{a_i\})\big\}_{i\geq 1}$ of the atoms $a_i$ of the measure $\mu_z$ arranged in a non-increasing way.
Note that since the conditional probability distributions are defined uniquely (mod $0$) so is the function $m_f$.
\begin{thm}[Rokhlin,\cite{Rokhlin57}]\label{t:Rokhlin}
Assume that $f_i:X_i\longrightarrow Z$ ($i=1,2$) are two measurable functions defined on standard probability spaces $(X_i,\mu_i)$. Then there exist an isomorphism $T:X_1\longrightarrow X_2$ of the measure spaces with the property that $\mu_1\circ T^{-1} = \mu_2$ and $f_2\circ T(x)= f_1(x)$ almost everywhere if and only if their extended functions
\begin{equation*}
f_i^{e}: X_i\longrightarrow Z\times\Sigma, \quad x\mapsto \big(f_i(x),m_{f_i}\circ f_i(x)\big)
\end{equation*}
have the same distribution, by which we mean that the measures $D_1=\mu_1\circ (f_1^e)^{-1}$ and $D_2=\mu_2\circ (f_2^e)^{-1}$ coincide.
\end{thm}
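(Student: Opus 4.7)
The plan is to prove necessity directly from the transport of conditional measures, and sufficiency by disintegrating each $\mu_i$ along $f_i^e$ and assembling fibrewise isomorphisms through a canonical fibred model over $Z$.

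Necessity is nearly tautological: if $T\colon X_1\to X_2$ is a measure-preserving isomorphism intertwining $f_1$ and $f_2$, then the pushforward $T_*$ carries the conditional measure $\mu_{1,z}$ to $\mu_{2,z}$ for $\mu_1\circ f_1^{-1}$-a.e.\ $z$, and therefore preserves its metric type. Hence $m_{f_1}(z)=m_{f_2}(z)$ a.e., and the joint laws $D_1$ and $D_2$ of $(f_i,m_{f_i}\circ f_i)$ coincide.

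For sufficiency, assume $D_1=D_2$. Projecting to the $Z$-coordinate yields $\mu_1\circ f_1^{-1}=\mu_2\circ f_2^{-1}$; call this common measure $\mu_Z$. Disintegrate each $\mu_i$ with respect to $f_i^e$: because $\sigma=m_{f_i}(z)$ holds almost surely, the disintegration is effectively parametrised by $z$ alone, and the conditional measure $\mu_{i,z}$ on the fibre $f_i^{-1}(z)$ has metric type $\sigma=m_{f_i}(z)$ for $\mu_Z$-a.e.\ $z$. Since two standard probability spaces are measure-isomorphic iff they share the same metric type, the fibres $(f_1^{-1}(z),\mu_{1,z})$ and $(f_2^{-1}(z),\mu_{2,z})$ are abstractly isomorphic for $\mu_Z$-a.e.\ $z$. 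To assemble these fibrewise isomorphisms into a single Borel map, I would factor each $X_i$ through a canonical bundle over $Z$: for $\sigma=(c_i)\in\Sigma$ let $M_\sigma$ be the disjoint union of atoms of weights $c_i$ with an interval carrying Lebesgue measure of total mass $1-\sum_i c_i$, and form the standard Borel bundle $\hat X=\bigl\{(z,t):t\in M_{m_{f_1}(z)}\bigr\}$ equipped with the canonical fibre measures integrated against $\mu_Z$. A measurable-selection argument then produces fibre-preserving Borel isomorphisms $\Phi_i\colon X_i\to\hat X$ with $\mu_i\circ\Phi_i^{-1}$ equal to this canonical measure, and $T=\Phi_2^{-1}\circ\Phi_1$ is the required isomorphism; the identity $f_2\circ T=f_1$ is automatic because each $\Phi_i$ commutes with the projection to $Z$.

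The main obstacle is precisely this simultaneous Borel measurability of the fibrewise isomorphisms in $z$, i.e.\ producing the $\Phi_i$ as actual Borel maps rather than as an a.e.-defined family of abstract isomorphisms. This is the standard technical heart of Rokhlin's classification and is handled by measurable-selection theorems for fibred standard Borel spaces; once the canonical model $\hat X$ is in place, everything else---equality of $Z$-marginals, matching metric types on fibres, $f$-equivariance of $T$---falls out directly from the hypothesis $D_1=D_2$ and the uniqueness of disintegrations.
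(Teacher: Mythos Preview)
The paper does not prove this theorem at all; it is stated as a classical result with a citation to Rokhlin's original paper \cite{Rokhlin57} and to modern expositions such as \cite{Bogachev07}, and no argument is given. There is therefore nothing to compare your attempt against.

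That said, your sketch is the standard route and is essentially correct. Necessity is indeed immediate from the fact that a measure-preserving $T$ intertwining $f_1$ and $f_2$ pushes the conditional measures $\mu_{1,z}$ forward to $\mu_{2,z}$ and hence preserves their metric types. For sufficiency, your reduction to a canonical fibred model over $Z$ whose fibre over $z$ is the standard space of metric type $m_{f_1}(z)$ is exactly how one turns the abstract fibrewise isomorphisms into a single Borel map, and you have correctly isolated the genuine technical content: the measurable selection needed to make $z\mapsto\Phi_i(\,\cdot\,)$ jointly Borel. One small point worth tightening: rather than building the bundle $\hat X$ abstractly and invoking a selection theorem, it is cleaner to realise each $(X_i,\mu_i)$ as $[0,1]$ with a Borel measure (every standard probability space admits such a model), so that the conditional measures $\mu_{i,z}$ live on a fixed ambient space and the fibrewise isomorphism can be written down explicitly via the distribution functions of the atoms and the continuous parts; this makes the joint measurability in $z$ transparent and avoids the need for an abstract selection argument.
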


\subsection{Group actions and ergodic decompositions}
\label{s:GroupActionsErgodicDecomposition}
The product
$
 \S_\N\times \S_\N
$
of the infinite symmetric group
\[
\S_\N = \bigcup_{n=1}^\infty \S_{\{1,\ldots,n\}}
\]
of all finite permutations of $\N$ acts on the product space
\[
(X\times Y)^\N = X^\N\times Y^\N
\]
in the canonical way by acting independently on the indices of the sequences, i.e.
\[
g \cdot \left(\left(x_i\right)_{i=1}^\infty, \left(y_j\right)_{j=1}^\infty\right) = \left( \left(x_{g_1^{-1}(i)}\right)_{i=1}^\infty, \left(y_{g_2^{-1}(j)}\right)_{j=1}^\infty \right)
\]
for every $\left((x_i),(y_i)\right)$ from $X^\N\times Y^\N$ and $g=(g_1,g_2)$ from  $S_\N\times S_\N$.
Its action on the space of infinite matrices is that of  permuting  rows and columns separately:
\[
g\cdot \left(r_{i,j}\right) = \left(r_{g_1^{-1}(i), g_2^{-1}(j)}\right)_{i,j}
\]
for every  $g=(g_1,g_2)$ from  $S_\N\times S_\N$ and matrix $(r_{i,j})$.

Let $G$ be a countable semigroup acting measurably on a standard Borel space, which will be in our case the space of matrices $Z^{\N\times\N}$.
Every $G$--invariant probability measure $D$ on $Z^{\N\times\N}$ is then decomposed into $G$--invariant ergodic measures $D_r$ by a (mod 0) uniquely defined Borel mapping
\[
\pi: Z^{\N\times\N} \longrightarrow\meas_1\left(Z^{\N\times\N},G\right), \quad r\mapsto D_r^{G},
\]
which maps into the standard Borel space of $G$--invariant probability measures, endowed with the topology of weak convergence, such that the formula
\[
D(B) = \int_{Z^{\N\times\N}} D_r^{G} (B) \cdot dD(r)
\]
holds for every Borel set $B\subseteq Z^{\N\times\N}$.

\subsection{De Finetti's theorem}
We shall make use of de Finett's theorem on exchangeable distributions in the following form:
\textit{
Every Borel probability measure $m$ on the product $X^\N$ of a Borel space $X$, which is invariant and ergodic with respect to the action $\S_\N$ is Bernoulli, i.e.
\[
m = \mu^\N
\]
for a Borel probability measure $\mu$ on $X$.
}
There is a very simple and elegant proof of de Finetti's theorem with help of the point--wise ergodic theorem with respect to the countable group $\S_\N$, but we shall not dwell on this,--- \cite{De Finetti}.

\subsection{Matrix distribution of a measurable function}
Let us recall the definition of matrix distributions from \cite{Vershik FA}:
\begin{defn}
Let $f:(X,\mu)\times (Y,\nu) \longrightarrow Z$ be a measurable function in two variables.
Its \textit{matrix distribution} $D_f$ is the pushforward measure
\begin{equation*}
D_f = (\mu^\N\times \nu^\N)\circ F_f^{-1}
\end{equation*}
of the Bernoulli measure $\mu^\N\times \nu^\N$ under the evaluation function
\begin{equation*}
F_f : X^\N \times Y^\N \longrightarrow Z^{\N\times\N},
\quad\big((x_i), (y_j)\big) \mapsto \big(f(x_i,y_j)\big)_{i,j}.
\end{equation*}
\end{defn}
This definition generalizes the notion of matrix distributions for Gromov triples, i.e. Polish spaces with probability measure introduced in \cite{Vershik metric triples}.
Note that $D_f$ is a probability measure on the space of infinite matrices $Z^{\N\times \N}$, and that
$F_f$ is equivariant with respect to the action of $\S_\N\times\S_\N$, i.e.
\[
F_f \big(g \cdot\big(\left(x_i\right),\left(y_j\right)\big)\big) = g \cdot F_f \big(\left(x_i\right), \left(y_j\right)\big).
\]
Hence invariance and ergodicity of the Bernoulli measure $\mu^\N\times\nu^\N$ yields invariance and ergodicity of the matrix distribution.

\subsection{The congruence group of a measurable function}

The \textit{congruence group} of a function in two arguments is the group of measure--preserving symmetries
\begin{multline*}
K_f= \Big\{ (S,T) \in \Aut_0\left(X,\mu\right)\times\Aut_0\left(Y,\nu\right):
\\
f\big(S(x) ,T(y)\big)=f(x,y) \quad \text{a.e.} \Big\}.
\end{multline*}
This group plays an important role for our partial solution to the characterization problem of matrix distributions, Theorem \ref{t:simple measures}.
It is remarkable that the congruence group  is compact, when endowed with the weak topology.
This fact is shown in \cite{Congruence group}, but we will not make use of it in the sequel.

As in \cite{Vershik13} we shall call any pure function $f$ with trivial congruence group simply \textit{completely pure function}.


\section{Reduction of the classification problem to that of pure functions}
\label{s:PureFunctions}

Any measurable function $f$ has a pure factor $\bar f$ defined in a natural way:
As the functions $f_X$ and $f_Y$ are Borel the equivalence relations defined by
\begin{equation*}
\zeta_X= \big\{\left(x_1,x_2\right)\in X\times X :  f(x_1,\,\cdot\,) = f(x_2,\,\cdot\,) \: (\bmod\,\nu)\big\}
\end{equation*}
and
\begin{equation*}
\zeta_Y = \big\{\left(y_1,y_2\right)\in Y\times Y : f(\,\cdot\,,y_1) = f(\,\cdot\,,y_2) \: (\bmod\, \mu)\big\}
\end{equation*}
are partitions of the respective spaces into measurable components.
In order to stay within the category of standard measure spaces we define\footnote{It is no good choice to define the factor $X/\zeta_X$ to be the set of all equivalence
classes with respect to $\zeta_X$ and its sigma algebra the algebra of all $\zeta_X$-saturated $\mf S$-measurable sets; this space need not to be standard Borel.} the factor spaces
\[
X/\zeta_X \quad\text{and}\quad Y/\zeta_Y
\]
as the standard Borel space $\mf B(Y,Z)$ and $\mf B(X,Z)$ with the factor mappings
\begin{align*}
\pi_X: X\longrightarrow X/\zeta_X, \quad &\pi_X=f_X,
\intertext{and}
\pi_Y: Y\longrightarrow Y/\zeta_Y, \quad & \pi_Y=f_Y,  
\end{align*}
and the projected measures $\mu\circ \pi_X^{-1}$ and $\nu\circ\pi_Y^{-1}$ as measures, respectively. 
The mapping $f_X$ as function from $X$ into the space $\mf B(Y,Z)$ translates under the projection $\pi_X$ to the identity mapping regarded as function from $X/\zeta_X$ to $\mf B(Y,Z)$, and by the measurability it originates from a jointly measurable function
\[
f':X/\zeta_X \times Y\longrightarrow Z
\]
which by construction satisfies that
\[
f'\big(\pi_X(x), y\big) = f(x,y) \quad\text{a.e.},
\]
and the corresponding map $f'_{X/\zeta_X}$ is one--to--one on a set of full measure.
In the same manner we proceed with the second argument and finally end with a measurable function
\begin{equation*}
\bar f: X/\zeta_X \times Y/\zeta_Y\longrightarrow Z
\end{equation*}
such that
\[
\bar f\big(\pi_X(x),\pi_Y(y)\big) = f(x,y) \quad\text{a.e.}.
\]
This shows that the diagram
\[
\begin{CD}
X\times  Y @>f>> Z
\\
@V\text{meas.--pres.}V\pi= (\pi_X,\pi_Y)V @VV\text{id}V
\\
X/\zeta_X\times Y/\zeta_Y@>f'>>Z
\end{CD}
\]
is commutative on a set of full measure, and since the corresponding mappings $\bar f_{X/\zeta_X}$ and $\bar f_{Y/\zeta_Y}$ are now one--to--one on a set of full measure, the function $\bar f$ is consequently pure.
It is not difficult to see that this pure factor $\bar f$ is uniquely determined up to isomorphy. We will sometimes refer to it as the \textit{purification} or the \textit{unique pure factor} of $f$.

The reduction of the isomorphism problem is done in the spirit of Rokhlin's Theorem \ref{t:Rokhlin}.
Instead of $f$ we consider its extended pure factor
\begin{equation*}
\bar f^e: X/\zeta_X \times Y/\zeta_Y \longrightarrow Z\times \Delta^2, \quad (x,y)\mapsto \left(\bar f(x , y), m(\mu_{x}), m(\nu_{y})\right),
\end{equation*}
where $m(\mu_{x})$ and $m(\nu_{y})$ denote the metric types of the respective ($\bmod\, 0$ uniquely determined) conditional measures $\mu(\,\cdot\,|\pi_X = x)$ and $\nu(\,\cdot\,|\pi_Y = y)$.

\begin{thm}
\label{t:ReductionToPureCase}
Two not necessarily pure functions $f:(X,\mu)\times (Y,\nu)\longrightarrow Z$ and $g: (X',\nu')\times (Y',\nu') \longrightarrow Z$ are isomorphic if and only if their extended pure factors
\begin{equation*}
\bar f^e: X/\zeta_X \times Y/\zeta_Y \longrightarrow Z\times \Delta^2, \quad (x,y)\mapsto \big(\bar f(x , y), m(\mu_{x}), m(\nu_{y})\big),
\end{equation*}
and
\begin{equation*}
\bar g^e: X'/\zeta_{X'} \times Y'/\zeta_{Y'} \longrightarrow Z\times \Delta^2, \quad (x', y')\mapsto \big(\bar g(x', y'), m(\mu'_{x'}), m(\nu'_{y'})\big),
\end{equation*}
both defined as above, are isomorphic.
\end{thm}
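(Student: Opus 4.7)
My plan is to reduce this to Rokhlin's Theorem \ref{t:Rokhlin} by performing a fiberwise analysis over the canonical projections $\pi_X:X\to X/\zeta_X$ and $\pi_Y:Y\to Y/\zeta_Y$; I split the proof into the two implications.

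For the forward direction, given an isomorphism $(S,T)$ carrying $f$ to $g$, I would first check that $S$ sends $\zeta_X$--classes to $\zeta_{X'}$--classes: if $f(x_1,\cdot)=f(x_2,\cdot)$ a.e., then $g(S(x_1),T(y))=g(S(x_2),T(y))$ a.e., and since $T$ is measure--preserving this gives $g(S(x_1),\cdot)=g(S(x_2),\cdot)$ a.e.\ on $Y'$. Hence $S$ descends to $\bar S:X/\zeta_X\to X'/\zeta_{X'}$, and likewise $T$ to $\bar T$. By essential uniqueness of conditional measures, $S$ pushes $\mu_x$ forward to $\mu'_{\bar S(x)}$ a.e., so the associated metric types agree, and analogously in $y$; thus $(\bar S,\bar T)$ intertwines $\bar f^e$ and $\bar g^e$.

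For the backward direction, given an isomorphism $(\bar S,\bar T)$ of $\bar f^e$ and $\bar g^e$, I would disintegrate $\mu$ and $\mu'$ along $\pi_X$ and $\pi_{X'}$ to obtain conditional measures $(\mu_x)$ and $(\mu'_{x'})$. Since $m(\mu_x)=m(\mu'_{\bar S(x)})$ a.e., the classification of standard probability spaces by metric type furnishes, for a.e.\ $x$, an isomorphism $S_x:(\pi_X^{-1}(x),\mu_x)\to(\pi_{X'}^{-1}(\bar S(x)),\mu'_{\bar S(x)})$. A measurable choice of such fiber isomorphisms glues into a measure--preserving map $S:X\to X'$ lifting $\bar S$, and $T$ is obtained analogously. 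Since by construction of the purification $f(x,y)=\bar f(\pi_X(x),\pi_Y(y))$ a.e.\ and $g(x',y')=\bar g(\pi_{X'}(x'),\pi_{Y'}(y'))$ a.e., the identity
\[
g(S(x),T(y))=\bar g(\bar S(\pi_X(x)),\bar T(\pi_Y(y)))=\bar f(\pi_X(x),\pi_Y(y))=f(x,y)
\]
holds a.e., providing the desired isomorphism of $f$ and $g$.

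The one non--routine step is the measurable selection of the fiber isomorphisms $x\mapsto S_x$, that is, a parametric version of Rokhlin's classification of standard measure spaces by their metric type. I would establish this by equipping the space of standard probability measures on a Polish model space with its natural Borel structure and applying a measurable selection theorem to the Borel map sending an isomorphism to its source/target pair; alternatively one can exhibit an explicit model fiber (depending only on the metric type) into which both conditional systems can be mapped measurably. Once this parametric Rokhlin step is granted, the rest of the argument reduces to bookkeeping with conditional measures and the defining property of the purification.
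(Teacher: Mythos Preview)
Your proposal is correct and follows essentially the same approach as the paper: the paper's proof consists of the single observation that a measure--preserving isomorphism of the quotient spaces which intertwines the metric--type functions can be lifted fiberwise to an isomorphism of the original spaces, which is exactly your backward direction via the parametric Rokhlin step. You are simply more explicit than the paper, spelling out the forward direction and flagging the measurable selection of fiber isomorphisms as the only non--routine ingredient; the paper takes this lifting for granted without further comment.
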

\begin{proof}
The statement of the theorem is clear since every measure preserving isomorphism $\bar T:X/\zeta_X\longrightarrow X'/\zeta_X'$ which carries the function $x\mapsto m(\mu_{x})$ to the function $x'\mapsto m(\mu'_{x'})$ ($\bmod\, 0$) can be lifted to a measure preserving isomorphism $T:X\longrightarrow X'$ so that $T= \bar T\circ\pi_X$ ($\bmod\, 0$) and the same is true for the second coordinate spaces $Y$ and $Y'$.
\end{proof}

\section{The matrix distribution as complete invariant for pure functions}\label{s:complete invariant}
\label{s:CompleteInvariant}

In this section we resume the individual canoncial model and the completeness theorem for measurable functions in two variables.
Are results and proofs, apart from some technical details which are elaborated in greater detail,  can be  also found in \cite{Vershik FA}.

Let us start with an auxiliary lemma on pure functions.
We say that a sigma algebra $\mf S\subseteq\mf B_X$ equals modulo a measure $\mu$ (or simply `$\bmod\, 0$', whenever it is clear to which measure we refer) the whole Borel algebra $\mf B_X$, if its measure algebra $\mf S_\mu=\{[A]_\mu: A\in\mf S\}$, with $[A]_\mu=\{B\in\mf B_X: \mu(B\Delta A)=0\}$, coincides with the measure algebra defined by $\mf B_X$ itself.

\begin{lem}\label{l:sev:pure}
Suppose $f:X\times Y\rightarrow Z$ is a pure function. Then the following properties hold:
\begin{enumerate}
\item\label{i:pure} For any Borel set $Y'\subseteq Y$ which is of full $\nu$--measure the set of functions $\left\{f\left(\,\cdot\,, y\right): y\in Y'\right\}$ generates ($\bmod\, 0$)
the Borel algebra of $X$.
\item\label{i:pure sequencial} For $\nu^\N$--almost every sequence $\left(y_j\right)_{j=1}^\infty$ the countable collection of functions $\left\{f(\,\cdot\,,y_j): j\geq 1\right\}$ generates ($\bmod\, 0$) the Borel algebra of $X$.
\end{enumerate}
By symmetry the same statements hold when interchanging the role of $Y$ and $X$.
\end{lem}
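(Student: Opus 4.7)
The plan is to prove (ii) first by a direct Fubini argument together with Blackwell's theorem, and then to deduce (i) essentially for free. I start by introducing, for each pair $(x_1,x_2)\in X\times X$, the coincidence set
\[
N(x_1,x_2) = \bigl\{y\in Y:\, f(x_1,y) = f(x_2,y)\bigr\},
\]
which is Borel jointly in $(x_1,x_2,y)$ because the diagonal of $Z\times Z$ is Borel (being a standard Borel space, $Z$ is second--countable Hausdorff in a compatible Polish topology). By Lemma \ref{l:pure} there is a Borel set $X_0\subseteq X$ of full $\mu$--measure on which $f_X$ is one--to--one; consequently, for any two distinct $x_1,x_2\in X_0$, the elements $f(x_1,\,\cdot\,)$ and $f(x_2,\,\cdot\,)$ of $\mf B(Y,Z)$ differ, which means $\nu\bigl(N(x_1,x_2)\bigr)<1$.

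For (ii), fix a jointly measurable representative of $f$ and, for each pair $(x_1,x_2)$, consider the event
\[
E(x_1,x_2) = \bigl\{(y_j)_{j=1}^\infty\in Y^\N:\, y_j\in N(x_1,x_2) \text{ for all } j\bigr\}.
\]
By independence $\nu^\N\bigl(E(x_1,x_2)\bigr) = \lim_{n\to\infty}\nu\bigl(N(x_1,x_2)\bigr)^n$, which vanishes as soon as $\nu(N(x_1,x_2))<1$. Applying Fubini to the jointly measurable set $\bigl\{((x_1,x_2),(y_j))\in (X\times X)\times Y^\N:\, (y_j)\in E(x_1,x_2)\bigr\}$ with respect to $(\mu\times\mu)\times\nu^\N$, we conclude that for $\nu^\N$--almost every sequence $(y_j)$ the set of pairs $(x_1,x_2)\in X_0\times X_0$ with $x_1\neq x_2$ and $f(x_1,y_j)=f(x_2,y_j)$ for every $j$ is $\mu\times\mu$--null. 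Equivalently, the countably generated sub--sigma algebra $\mf S_{(y_j)}\subseteq\mf B_X$ generated by $\{f(\,\cdot\,,y_j):j\geq 1\}$ separates $\mu$--almost all points of $X$. Invoking Blackwell's theorem on countably generated sub--sigma algebras of standard Borel spaces --- such a sigma algebra that separates points on a set of full measure coincides $(\bmod\, 0)$ with the entire Borel algebra --- we obtain (ii).

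For (i), it suffices to observe that $(Y')^\N$ has full $\nu^\N$--measure, so we may pick a sequence $(y_j)_{j\geq 1}$ entirely contained in $Y'$ that lies inside the full measure set of \emph{good} sequences from (ii). The countable family $\{f(\,\cdot\,,y_j):j\geq 1\}\subseteq\{f(\,\cdot\,,y):y\in Y'\}$ already generates $\mf B_X$ $(\bmod\, 0)$, hence so does the larger uncountable family.

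The step I expect to be most delicate is the passage from separation of points to generation of the full Borel algebra $(\bmod\, 0)$, which is the content of Blackwell's theorem and is not purely soft. It is precisely this step that forces us to first handle the countably generated situation in (ii) and then reduce the uncountable case (i) to it.
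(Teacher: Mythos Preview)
Your proof is correct and follows a genuinely different route from the paper's.

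The paper proves \itemref{i:pure} first: it argues that, after modification on a null set, $f$ is measurable with respect to $\mf F_Y\times\mf B_Y$, where $\mf F_Y$ is the sigma algebra generated by $\{f(\,\cdot\,,y):y\in Y\}$; hence the injective map $f_X:x\mapsto f(x,\,\cdot\,)$ is $\mf F_Y$--measurable ($\bmod\,0$), forcing $\mf F_Y=\mf B_X$ ($\bmod\,0$). It then derives \itemref{i:pure sequencial} from \itemref{i:pure} by a density argument in the Polish space $\mf B(X,Z)$: almost every sequence $(y_j)$ has the property that $\{f(\,\cdot\,,y_j):j\geq 1\}$ meets every basic open set of positive image measure, so that each $f(\,\cdot\,,y)$ with $y$ in a suitable full--measure $Y'$ is a limit in measure (hence an a.e.\ limit along a subsequence) of functions from the countable family.

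You invert the logical order: \itemref{i:pure sequencial} comes first via the Fubini computation $\nu^\N\bigl(E(x_1,x_2)\bigr)=0$ whenever $\nu\bigl(N(x_1,x_2)\bigr)<1$, followed by Blackwell's theorem, and \itemref{i:pure} then drops out by choosing a good sequence inside $(Y')^\N$. This avoids the topology on $\mf B(X,Z)$ and the density argument entirely, replacing them with an explicit point--separation statement. The price is the descriptive set--theoretic input: Blackwell's theorem together with the passage from ``the set of non--separated pairs is $\mu\times\mu$--null'' to ``the map $x\mapsto (f(x,y_j))_j$ is injective on a Borel set of full measure'' (a standard disintegration argument, which you rightly flag as the delicate point). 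The paper's route to \itemref{i:pure} sidesteps this, using only that an injective Borel map generates the Borel algebra. Both arguments are of comparable length; yours is cleaner for \itemref{i:pure sequencial}, the paper's more direct for \itemref{i:pure}.
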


\begin{proof}
As the restriction of $f$ to $X\times Y'$ is also a pure function, it is sufficient to prove (\ref{i:pure}) for the case $Y'=Y$.
Let $\mf F_Y$ denote the sigma algebra generated by the set of functions $\left\{f_y=f\left(\,\cdot\,,y\right) : y\in Y\right\}$.
Using standard arguments one sees that the function $f$ is measurable with respect to the sigma algebra $\mf F_Y\times \mf B_Y$, where $\mf B_Y$ is the Borel algebra of $Y$, after a  modification on a set of measure zero if necessary.
Thus the mapping
\[
f_X: X\longrightarrow \mf B(Y,Z), \quad x\mapsto f\left(x,\,\cdot\,\right),
\]
is  measurable ($\bmod\, 0$) with respect to the sigma algebra $\mf F_Y$\footnote{by which we mean that it coincides on a set of full measure with an $\mf F_Y$-measurable function}.
Hence injectivity of the map $f_X$ on a set of full measure implies that $\mf F_Y$ coincides modulo null sets with the entire Borel algebra of $X$.

To prove (\ref{i:pure sequencial}) let us choose a countable base $\left\{O_n\right\}_{n\geq 1}$ for the topology in $\mf B(Y,Z)$.
Now $\nu^\N$--almost every sequence $\left(y_j\right)$ is such that for any set $O_n$ of positive measure $\mu\circ f_X^{-1}$, its intersection with $\{f_{y_j}:j\geq 1\}$ is non--empty.
For every such sequence $\left(y_j\right)$ the sigma algebra $\mf F_{\left(y_j\right)}$ generated by the functions $\left\{f_{y_j}: j\geq 1\right\}$ contains ($\bmod\, 0$) the sigma algebra generated by the collection $\left\{f_y: y\in Y'\right\}$ with $Y'$ being the preimage of the support of the measure $\nu\circ f_Y^{-1}$ under the map $f_Y$.
As this set has full measure it follows from (\ref{i:pure}) that $\mf F_{\left(y_j\right)}$ coincides ($\bmod\, 0$) with the entire Borel algebra of $X$.
\end{proof}

\subsection{Individual canonical model of a measurable function}
In the sequel we assume that that $f:X\times Y\longrightarrow Z$ takes values in the interval $Z=[0,1]$.
This means no loss in generality, as any Borel space is measurably isomorphic to $[0,1]$ or to an -- at most -- countable subset.
It is an immediate consequence of Lemma \ref{l:sev:pure} that for $\mu^\N$--almost every sequence $(x_i)$ and $\nu^\N$--almost every sequence $(y_j)$ the mappings
\begin{align*}
L_{(y_j)} : X \longrightarrow [0,1]^\N,\quad& x\mapsto \big(f(x,y_j)\big)_{j},
\\
L_{(x_i)} : Y \longrightarrow [0,1]^\N,\quad& y\mapsto \big(f(x_i,y)\big)_{i}
\end{align*}
are one--to--one on a set of full measure, and therefore they are isomorphisms between the measure spaces $(X,\mu)$, $(Y,\nu)$ and the spaces
\begin{align*}
\left(X_{(y_j)},\mu_{(y_j)}\right) &= \left([0,1]^{\N},\:\mu\circ L_{(y_j)}^{-1} \right),
\\
\left(Y_{(x_i)},\nu_{(x_i)}\right) &= \left([0,1]^{\N},\:\nu\circ L_{(x_i)}^{-1} \right),
\end{align*}
respectively.
These spaces together with the function
\begin{equation*}
f_{(x_i),(y_j)} = f\circ\left (L_{(y_j)}\times L_{(x_i)}\right)^{-1}
\end{equation*}
form the \textit{canonical representation} (or \textit{canonical model}) of our measurable function $f$.
We shall call this model \textit{individual canonical model}, as both measures $\mu_{(y_j)}$, $\nu_{(x_j)}$ and the function $f_{(x_i),(y_j)}$, being the density of the absolutely continuous measure
\begin{equation*}
m_{(y_j),(x_i)} = f_{(x_i),(y_j)} \cdot d (\mu_{(y_j)}\times \nu_{(x_i)}) = m\circ (L_{(y_j)}\times L_{(x_i)})^{-1},
\end{equation*}
are uniquely determined by the values of the single infinite matrix
\[
r=(r_{i,j})= \big(f(x_i,y_j)\big) \in Z^{\N\times\N}.
\]

The construction of the model is done by ergodic arguments with respect to the  two--dimensional shift
on $X^\N\times Y^\N$ defined by
\[
\sigma^{(k,l)} \big((x_i), (y_j)\big) = \big((x_{i+k}), (y_{j+l}) \big),
\]
for every $k,l\geq 1$ and  $\big((x_i), (y_j)\big)$ from $X^\N\times Y^\N$, but can as well be performed with respect to the action of $\S_\N\times\S_\N$.
For every $n\geq 1$ let us choose a countable algebra $\mf A_n$ which generates the sigma algebra of all $n$--cylinders, i.e. the Borel sets formulated in the first $n$ coordinates only.
Using the ergodic theorem for the two--dimensional shift, we conclude that almost every choice of sequences $(x_i)_{i\in\N}$ and $(y_j)_{j\in\N}$ the following relations hold for every $A_1$ and $A_2$ from $\mf A=\bigcup_n \mf A_n$:
\begin{align}
\label{e:sev:mu_r}
\mu_{(y_j)} (A_1) &= \lim_{m\rightarrow\infty} \frac{1}{m}\cdot \sum_{k=1}^m \delta_{(r_{k,i})_{i=1}^\infty} (A_1),
\\
\label{e:sev:nu_r}
\nu_{(x_j)} (A_2) &=  \lim_{m\rightarrow\infty} \frac{1}{m}\cdot\sum_{l=1}^m \delta_{(r_{i,l})_{i=1}^\infty} (A_2),
\end{align}
and
\begin{multline}
\label{e:sev:m_r}
m_{(x_i),(y_j)} (A_1\times A_2)=
\\
=\lim_{m\rightarrow\infty} \frac{1}{m^2} \cdot\sum_{k,l = 1}^m
r_{k,l} \cdot \delta_{(r_{i,k})_{i=1}^\infty} (A_1) \cdot \delta_{(r_{l,j})_{j=1}^\infty} (A_2).
\end{multline}
In fact, given finitely many points $\{x_i: 1\leq i\leq n\}$ from $X$ and $\{y_j:1\leq j\leq n\}$ from $Y$,
and introducing the notation
\[
f_{(x_1,\ldots,x_n)}(y) = \big(f(x_1,y),\ldots, f(x_n,y)\big),
\]
and
\[
f_{(y_1,\ldots,y_n)}(x) = \big(f(x,y_1),\ldots, f(x,y_n)\big),
\]
then almost every continuations $(x_i)_{i=n+1}^\infty$ and $(y_j)_{j=n+1}^\infty$ of the finite sequences $(x_i)_{i=1}^n$ and $(y_j)_{j=1}^n$ are such that for any choice of sets $A_1$ and $A_2$ from $\mf A_n$,
\begin{align*}
\mu\circ f_{(y_1,\ldots y_n)}^{-1} (A_1) &= \lim_{m\rightarrow\infty} \frac{1}{m}\cdot\Big|\left\{ n+1 \leq k\leq m : f_{(y_1,\ldots,y_n)}(x_k)\in A_1\right\}\Big|,
\\
\nu\circ f_{(x_1,\ldots x_n)}^{-1} (A_2) &= \lim_{m\rightarrow\infty} \frac{1}{m}\cdot \Big|\left\{ n+1 \leq k\leq m : f_{(x_1,\ldots,x_n)}(y_k)\in A_2\right\}\Big|,
\end{align*}
and
\begin{equation*}
m\big(f_{(y_1,\ldots y_n)}^{-1}(A_1)\times f_{(x_1,\ldots,x_n)}^{-1} (A_2) \big)= \lim_{m\rightarrow\infty} \frac{1}{m^2} \cdot\sum_{(k,l)\in W\cap[n+1,m]^2} f(x_k,y_l).
\end{equation*}
Here  $W$ denotes the set
\[
W= \{(k,l)\in\N\times\N: f_{(y_1,\ldots,y_n)}(x_k)\in A_1 \text{ and }f_{(x_1,\ldots,x_n)}(y_l)\in A_2 \}.
\]
Hence the concatenated sequences $(x_i)_{i=1}^\infty$ and $(y_j)_{j=1}^\infty$ obviously satisfy the equations (\ref{e:sev:mu_r}), (\ref{e:sev:nu_r}) and (\ref{e:sev:m_r}).
Integrating over all choices of $((x_i)_{i=1}^n,(y_j)_{j=1}^n)$ from $X^n\times Y^n$ yields the sets
\begin{multline*}
\mc E_f(\mf A_n) = \Big\{\big((x_i), (y_j)\big)\in X^\N\times Y^\N: L_{(y_j)} \text{ and } L_{(x_i)} \text{ are one--to--one}\\
(\bmod\, 0) \text{ and (\ref{e:sev:mu_r}), (\ref{e:sev:nu_r}), (\ref{e:sev:m_r}) hold for all }A_1, A_2\in\mf A_n \Big\}
\end{multline*}
and hence their intersection
\begin{equation*}
\mc E_f(\mf A)=\bigcap_{n\geq 1} \mc E_f(\mf A_n)
\end{equation*}
is of full measure.

We shall call any $\big((x_i),(y_j)\big)$ from the set $\mc E_f(\mf A)$ a pair of \textit{typical sequences}.
It is well known that the image $F_f\big(\mc E_f(\mf A)\big)$ is then measurable with respect to the $D_f$--completion of the Borel algebra in the space $Z^{\N\times\N}$ thus we can find a Borel set
\[
\mc T_f(\mf A)\subseteq F_f\big(\mc E_f(\mf A)\big)
\]
which is of full measure.
We shall call this set $\mc T_f(\mf A)$ the set of \textit{typical matrices} with respect to the countable algebra $\mf A$ and the function $f$.

Summarizing these facts we have shown that for any choice of
\[
\big((x_i),(y_j)\big)
\]
from the set $\mc E_f(\mf A)$ the canonical model depends only on their matrix $(f(x_i,y_j))_{i,j}$.
Likewise, for any matrix
\[
r=(r_{i,j})_{i,j\in\N}
\]
from the set of typical matrices $\mc T_f(\mf A)$ we can construct spaces
\begin{align}
\label{e:sev:X_r and Y_r}
(X_r,\mu_r) &= \big([0,1]^\N,\mu_r\big)
\intertext{ and }
(Y_r,\nu_r) &=\big([0,1]^\N,\nu_r\big),
\end{align}
the measures $\mu_r$ and $\nu_r$ determined by the limits (\ref{e:sev:mu_r}) and (\ref{e:sev:nu_r}), and a measurable function
\begin{equation}\label{e:sev:f_r}
f_r: X_r\times Y_r\longrightarrow [0,1], \quad f_r= \frac{dm_r}{d(\mu_r\times\nu_r)},
\end{equation}
i.e. the Radon--Nikod\'ym derivative of the measure $m_r$ determined by the limit in (\ref{e:sev:m_r}) with respect to $\mu_r\times\nu_r$, which is isomorphic to the original function $f$.
Note that $\mu_r$ and $\nu_r$ are the empirical distributions of the rows and columns of the matrix $r$, and $m_r(A_1\times A_2)$ for two $n$-dimensional Borel sets $A_1$ and $A_2$ equals the average value $r_{k,l}$ which occurs when observing simultaneously the row segment $(r_{k,1},\ldots r_{k,n})$ belonging to $A_1$ and the column segment $(r_{1,l},\ldots r_{n,l})$ belonging to $A_2$.

In particular, the so constructed function $f_r$ is unique: Any matrix $r$ from the set $\mc T_f(\mf A)$ determines - up to isomorphism - the same function.
For general ergodic $(\S_\N\times \S_\N)$--invariant measures $D$ this construction might fail:
Of course the existence of the measures $\mu_r$ and $\nu_r$ follows from stationarity of the two--dimensional shift action, or of the separate actions of $S_\N\times \{1\}$ and $ \{1\}\times S_\N$ (cf. Section \ref{s:SimpleMeasures}), but it is by no means clear that the limit in (\ref{e:sev:m_r}) defines a measure $m_r$, nor that the so constructed function $f_r$ is unique up to isomorphism, contrary to what was stated in \cite{Vershik FA}.

With the above reconstruction of the canonical model we are able to show the main result of this section:

\begin{thm}[Completeness Theorem]\label{t:sev:complete invariant}
Suppose $f:X\times Y\longrightarrow Z$ and $g:X'\times Y'\longrightarrow Z$ are pure functions.
There exist measure preserving isomorphisms $S: (X,\mu)\longrightarrow (X',\mu')$ and $T: (Y,\nu)\longrightarrow (Y',\nu')$ of the corresponding standard probability spaces such that
\[
g\big(S(x),T(y)\big) = f(x,y) \quad (\bmod\, 0),
\]
if and only if their matrix distributions $D_f$ and $D_g$ coincide.
\end{thm}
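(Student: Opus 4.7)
The plan is to reduce the theorem to the uniqueness (up to isomorphism) of the individual canonical model constructed above. The forward direction is immediate: if $S:(X,\mu)\to(X',\mu')$ and $T:(Y,\nu)\to(Y',\nu')$ are measure-preserving with $g(S(x),T(y))=f(x,y)$ a.e., then $(S^\N,T^\N)$ pushes $\mu^\N\times\nu^\N$ forward to $(\mu')^\N\times(\nu')^\N$ and intertwines $F_f$ with $F_g$, yielding $D_f=D_g$.

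For the converse I would fix once and for all a countable algebra $\mf A=\bigcup_n\mf A_n$ generating the cylinder sigma algebra of $[0,1]^\N$. Applied to the pure functions $f$ and $g$, the construction of the preceding subsection yields sets of typical matrices $\mc T_f(\mf A)$ and $\mc T_g(\mf A)$ of full $D_f$- and $D_g$-measure respectively. Since by assumption $D_f=D_g$, their intersection is non-empty -- in fact of full measure. The crucial point is then that for any $r\in\mc T_f(\mf A)$ the spaces $(X_r,\mu_r)$, $(Y_r,\nu_r)$ and the density $f_r$ introduced in (\ref{e:sev:X_r and Y_r}) and (\ref{e:sev:f_r}) depend only on $r$ and the algebra $\mf A$, not on $f$ itself, and that this reconstructed model is isomorphic to $f$ via the coordinate maps $L_{(y_j)}$ and $L_{(x_i)}$. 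The same being true for $g$, picking any $r$ in $\mc T_f(\mf A)\cap\mc T_g(\mf A)$ shows that $f$ and $g$ are both isomorphic to the single function $f_r$; composing the two isomorphisms delivers the desired maps $S$ and $T$.

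The main technical obstacle I anticipate is making precise the assertion that the canonical model is determined by $r$ alone: one must argue that the limiting formulas (\ref{e:sev:mu_r})--(\ref{e:sev:m_r}), initially only computable on the countable algebra $\mf A$, extend to Borel probability measures $\mu_r,\nu_r$ on $[0,1]^\N$ and $m_r$ on its square, and that the Radon--Nikod\'ym derivative $f_r=dm_r/d(\mu_r\times\nu_r)$ indeed exists. This ultimately hinges on the membership $r\in\mc T_f(\mf A)$ witnessing an underlying measurable function producing $r$, so that the limits arise as almost-sure two-dimensional ergodic averages of a bounded measurable function. Pureness enters critically through Lemma \ref{l:sev:pure}, ensuring that the coordinate maps $L_{(y_j)}$ and $L_{(x_i)}$ generate the Borel structure of $X$ and $Y$ and are therefore measurable isomorphisms onto $(X_r,\mu_r)$ and $(Y_r,\nu_r)$; without this it would not be possible to transport $f$ to $f_r$.
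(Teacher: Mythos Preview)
Your proposal is correct and follows essentially the same approach as the paper: both directions are argued exactly as in the paper's proof, with the forward direction handled by pushing forward along $(S^\N,T^\N)$ and the converse by intersecting the full-measure sets $\mc T_f(\mf A)$ and $\mc T_g(\mf A)$ to obtain a single matrix $r$ whose individual canonical model $f_r$ serves simultaneously as an isomorphic copy of both $f$ and $g$. Your remarks on the technical obstacles (extension of the limits to genuine measures, existence of the Radon--Nikod\'ym derivative, and the role of pureness via Lemma~\ref{l:sev:pure}) accurately identify the points the paper addresses in the discussion preceding the theorem.
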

\begin{proof}
One direction is trivial: If there exist such isomorphisms $S$ and $T$ then the corresponding product transformation which maps $\big((x_i)_{i},(y_j)_{j}\big)$ to $\big( (S(x_i))_{i} , (T(y_j))_{j} \big)$ is a measure preserving isomorphism of the  spaces $X^\N\times Y^\N$ and $X'^\N\times Y'^\N$ which carries the matrix valued functions $F_f$ to $F_g$. Hence their push forward measures $D_f$ and $D_g$ coincide.

Now assume that $D_f=D_g$.
As in the preceding discussion we consider $Z= [0,1]$, fix countable algebras $\mf A_n$, $n\geq 1$, which generate the Borel structure of $[0,1]^n$, and put $\mf A= \bigcup_{n} \mf A_n$.
As $D_f=D_g$ the intersection of the sets of typical matrices
\[
\mc T_f(\mf A)\cap \mc T_g(\mf A)
\]
is still of full measure and therfore it is non--empty.
But any matrix $r=(r_{i,j})$ from this intersection determines a function
\[
f_r: (X_r,\mu_r)\times (Y_r,\nu_r) \longrightarrow [0,1]
\]
which is an isomorphic model \textit{simultanously} for both functions $f$ and $g$.
This proves the existence of the claimed isomorphisms $S$ and $T$.
\end{proof}

Note that an explicit form of the isomorphisms from Theorem \ref{t:sev:complete invariant} is
\[
S = L_{(y_j')}^{-1}\circ L_{(y_j)} \text{ and } T = L_{(x_i')}^{-1}\circ L_{(x_i)},
\]
the pairs $\big((x_i),(y_j)\big)$ and $\big((x_i'),(y_j')\big)$ being any two pairs of typical sequences, i.e.  from
\[
\mc E_f(\mf A)\cap \mc E_g(\mf A),
\]
which define the same matrix: $\big(f(x_i,y_j)\big)= \big(g(x_i',y_j')\big)$.
This observation will be useful in the proof of the following corollary, which will be needed
in Section \ref{s:SimpleMeasures}.

\begin{cor}\label{c:sev:aut}
Let $f:X\times Y\longrightarrow Z$ be pure. Then the map $F_f:X^\N\times Y^\N\longrightarrow Z$ is one--to--one ($\bmod\, 0$) if and only if its congruence group $K_f$ is trivial.
\end{cor}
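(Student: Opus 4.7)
The plan is to prove the two implications separately, using the canonical model machinery from the completeness theorem for the harder direction.

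For the easy direction, assume $F_f$ is one--to--one $(\bmod\, 0)$ and take $(S,T)\in K_f$. Since $f(S(x),T(y))=f(x,y)$ almost everywhere, Fubini gives that for $\mu^\N\times\nu^\N$--almost every pair $\big((x_i),(y_j)\big)$,
\[
F_f\big((S(x_i))_i,(T(y_j))_j\big)\;=\;F_f\big((x_i),(y_j)\big).
\]
Injectivity on a set of full measure then yields $S(x_i)=x_i$ and $T(y_j)=y_j$ for all $i,j$ on a set of full measure, and projecting to a single coordinate gives $S=\id$ $(\bmod\,\mu)$ and $T=\id$ $(\bmod\,\nu)$.

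For the reverse direction, assume $K_f$ is trivial. It suffices to show that $F_f$ is injective on the full--measure set $\mc E_f(\mf A)$ of typical sequences. Suppose $\big((x_i),(y_j)\big)$ and $\big((x_i'),(y_j')\big)$ both lie in $\mc E_f(\mf A)$ and satisfy $F_f\big((x_i),(y_j)\big)=F_f\big((x_i'),(y_j')\big)$. Applying the explicit formula noted just after Theorem \ref{t:sev:complete invariant} (with $g=f$), the maps
\[
S\,=\,L_{(y_j')}^{-1}\circ L_{(y_j)}, \qquad T\,=\,L_{(x_i')}^{-1}\circ L_{(x_i)}
\]
are measure--preserving automorphisms of $(X,\mu)$ and $(Y,\nu)$ satisfying $f(S(x),T(y))=f(x,y)$ almost everywhere, so $(S,T)\in K_f$. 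Triviality of $K_f$ forces $S=\id$ and $T=\id$ $(\bmod\,0)$. Hence $L_{(y_j)}(x)=L_{(y_j')}(x)$ for $\mu$--almost every $x$, i.e.\ $f(x,y_j)=f(x,y_j')$ for every $j$ and $\mu$--almost every $x$. By Lemma \ref{l:pure}, $f_Y$ is injective on a set $Y_0$ of full $\nu$--measure; since $(y_j),(y_j')$ are typical we may arrange $y_j,y_j'\in Y_0$, so $y_j=y_j'$ for every $j$. Symmetrically $x_i=x_i'$ for every $i$.

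The main obstacle is the reverse direction, and within it the bookkeeping of the chain of almost--everywhere identifications: one has to pass from $S=\id$ $(\bmod\,0)$ on $X$ to the pointwise identity of the selected sequences $(y_j),(y_j')$. The key point is that one may shrink $\mc E_f(\mf A)$ to the intersection with $(X_0\times Y_0)^\N$ for the full--measure sets $X_0,Y_0$ on which $f_X,f_Y$ are injective (possible because this intersection still has full $\mu^\N\times\nu^\N$--measure), so that the purity of $f$ can be applied coordinatewise without running into null--set ambiguities.
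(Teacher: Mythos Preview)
Your proof is correct and follows essentially the same approach as the paper: both directions use the canonical model maps $L_{(y_j)}$, $L_{(x_i)}$ to build the automorphisms $S,T$, and then invoke injectivity of $f_X,f_Y$ from purity to pass between triviality of $(S,T)$ and coincidence of the sequences. The only difference is presentational---the paper argues the harder direction by contrapositive (non-injectivity of $F_f$ on $\mc E_f(\mf A)$ yields a non-trivial $(S,T)$), while you argue directly and add a bit more care about restricting to $(X_0\times Y_0)^\N$; the key steps and ideas are identical.
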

\begin{proof}
As before we assume that $Z=[0,1]$.
Assume that $F_f$ is not one--to--one ($\bmod\, 0$).
If $\mf A$ is any arbitrary countable algebra generating the Borel structure of $[0,1]^\N$ then the map $F_f$ cannot be injective on $\mc E_f(\mf A)$, since this set is of full measure.
Thus there exist two different pairs $\big((x_i),(y_j)\big)$ and $\big((x_i'),(y_j')\big)$ from $\mc E(\mf A)$ which have the same image under $F_f$.
Therefore the mappings
\[
S = L_{(y_j')}^{-1}\circ L_{(y_j)} \text{ and } T = L_{(x_i')}^{-1}\circ L_{(x_i)},
\]
are measure preserving automorphisms of $(X,\mu)$ and $(Y,\nu)$ respectively for which $f(Sx,Ty)=f(x,y)$ ($\bmod\, 0$). Moreover these automorphisms satisfy $f(Sx , y_j)= f(x,y_j')$ for a.e. $x$ in $X$ and $f(x_i,Ty)= f(x_i',y)$ for a.e. $y$ in $Y$, $i$ and $j$ being arbitrary.
This proves that either $S$ or $T$ must be non-trivial since we may assume that both mappings $f_X$ and $f_Y$ (defined in Section \ref{s:BasicDefinitions}) are one--to--one.

The other direction is trivial: Assume that there exists a non--trivial automorphism $(S,T)$ in $K_f$. Then every set $B\subseteq X^\N\times Y^\N$ of full measure is almost invariant under $(S,T)^\N$. But at the same time the automorphism $(S,T)^\N$ leaves $F_f$ almost invariant and therefore we can find two different points in $B$ with the same image under $F_f$.
\end{proof}

\section{Absence of symmetry - simple measures}
\label{s:SimpleMeasures}

In this section we characterize those matrix distributions which originate from a function with no symmetries, i.e. with trivial congruence group.
Its main result, Theorem \ref{t:simple measures} corrects the corresponding statement Theorem 3 in \cite{Vershik FA}.

\begin{defn}
Let $D$ be a $\left(\S_\N\times\S_\N\right)$--invariant and ergodic measure on the space of matrices.
We say that $D$ is \textit{simple} if the invariant algebras $\mf B^{S_\N\times \{1\}}$ and $\mf B^{ \{1\}\times S_\N}$ generate ($\bmod\, 0$) the whole Borel algebra $\mf B$ of $Z^{\N\times\N}$, i.e.
\begin{equation*}\label{e:sev:simple}
\mf B^{S_\N\times \{1\}}\vee \mf B^{ \{1\}\times S_\N} = \mf B \quad (\bmod\, D).
\end{equation*}
\end{defn}

In other words simplicity of an $\left(\S_\N\times\S_\N\right)$--invariant and ergodic measure $D$ means that the dynamical system
\[
\big(Z^{\N\times\N},D,\S_\N\times\S_\N\big)
\]
 decomposes into the direct product
\begin{equation}
\label{e:decomposition}
\big(Z^{\N\times\N},D,\S_\N\times\S_\N\big) = \left(Z^{\N\times\N}/G_2, D, G_1\right) \times \left(Z^{\N\times\N}/G_1, D, G_2\right)
\end{equation}
of the systems of the ergodic components with respect to the commuting subgroups
\[
G_1= S_\N\times \{1\} \quad\text{and}\quad G_2= \{1\}\times S_\N,
\]
which act seperately on the rows and columns of the matrices.
In fact, we choose the standard Borel models
\begin{align}
\label{e:ComponentXX}
\left(Z^{\N\times\N}/G_2, D\right) &= \big(\meas_1\left(Z^{\N\times \N}\right), D\circ \pi_1^{-1}\big),
\\
\label{e:ComponentYY}
\left(Z^{\N\times\N}/G_1, D\right) &= \big(\meas_1\left(Z^{\N\times\N}\right), D\circ \pi_2^{-1}\big),
\end{align}
for the measure space of ergodic components, together with the isomorphism
\begin{equation}
\label{e:decompositionIsomorphisms}
\begin{aligned}
\pi: Z^{\N\times\N} &\longrightarrow Z^{\N\times\N}/G_2 \times Z^{\N\times\N}/G_1,
\\
r &\mapsto \left(\pi_1(r),\pi_2(r)\right) = \left(D_r^{G_2}, D_r^{G_1}\right),
\end{aligned}
\end{equation}
given by the decomposition of $D$ into its $G_2$--ergodic and $G_1$--ergodic measures, respectively.
As we may assume without loss in generality the mappings $r\mapsto D_r^{G_2}$ and $r\mapsto D_r^{G_1}$ invariant with respect to $G_2$ and $G_1$ respectively, the equations
\begin{align*}
g_1\cdot D_r^{G_2}  &= D_{g_1\cdot r}^{G_2},
\\
g_2 \cdot D_r^{G_1}  &= D_{g_1\cdot r}^{G_1},
\end{align*}
with $g_1\in G_1$ and $g_2\in G_2$,
define unambigously actions of the permutation groups $G_1$ and $G_2$ on the respective spaces of ergodic components so that the product of these actions is a factor of the action of $\S_\N\times\S_\N$ on $Z^{\N\times\N}$.

Note that the points $\mf x$ and $\mf y$ of the factor spaces $\mf X$ and $\mf Y$ are represented by permutation--invariant and ergodic measures $D_r^{G_2}$ and $D_r^{G_1}$ which are therfore Bernoulli measures by de Finetti's theorem.
Concretely, when considering the space $Z^{\N\times\N}$ as the `space of sequences of rows' $(Z^{\{1\}\times\N})^\N$  or  the `space of sequences of columns' $(Z^{\N\times\{1\}})^\N$  then
\begin{equation*}
D_r^{G_2}=\nu_r^\N  \quad\text{and}\quad D_r^{G_1} = \mu_r^\N
\end{equation*}
the measures $\nu_r$ and $\mu_r$  being a probability measure the space of columns and rows, respectively.
We henceforth will not distinguish the spaces of ergodic components in \eqref{e:decomposition} from the spaces
\begin{align}
\label{e:ComponentX}
(\mf X,\mu_{\mf X},G_1) &= \left(\meas_1\left(Z^{\N\times\{1\}}\right), D\circ \pi_1^{-1},G_1\right),
\\
\label{e:ComponentY}
(\mf Y,\nu_{\mf Y},G_2) &= \left(\meas_1\left(Z^{\{1\}\times\N}\right), D\circ \pi_2^{-1},G_2\right),
\end{align}
the isomorphism $\pi=(\pi_1,\pi_2)$ interpreted as mapping
\begin{equation}
\label{e:IsomorphismDecomposition}
\pi: Z^{\N\times\N}\longrightarrow \mf X\times \mf Y, \quad r\mapsto (\nu_r,\mu_r),
\end{equation}
whenever appropriate.
Notice that the measures $\nu_r$ and $\mu_r$ describe $(\bmod~0)$ the empirical distribution of  columns and rows of the matrix $r$.
The group actions on $\mf X$ and $\mf Y$ take their explicit form as
\begin{align*}
g_1 (\mu_r) &= \nu_{g_1(r)},
\\
g_2 (\nu_r) &=\mu_{g_2(r)},
\end{align*}
for any $g_1$ in $G_1$ and  $g_2$ in $G_2$.
Further note that by invariance and ergodicity of the measure $D$ with respect to $G_1\times G_2$, the measures $\mu_\mf X$ and $\nu_\mf Y$ are invariant and ergodic with respect to the corresponding actions of $G_1$ and $G_2$.

If the measure $D$ is not simple then the map $\pi$ is not one--to--one on a set of full measure.
In this case the direct product of $(\mf X,\mu_{\mf X},G_1)$ and $(\mf Y,\nu_{\mf Y},G_2)$ is a non--trivial factor of $\big(Z^{\N\times\N},D,G_1\times G_2\big)$.
This gives us the following characterisation of simple measures:
\begin{prop}
A measure $D$ is simple if and only if the (almost everywhere uniquely defined) mapping which sends a matrix $r$ to the pair $(\nu_r,\mu_r)$ of its empirical distribution of columns and rows respectively, is one--to--one ($\bmod\, 0$).
\end{prop}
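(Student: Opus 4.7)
The plan is to recognise that the map $r \mapsto (\nu_r,\mu_r)$ from the statement is, up to the de Finetti bijections $\nu_r\leftrightarrow\nu_r^\N$ and $\mu_r\leftrightarrow\mu_r^\N$, precisely the joint ergodic decomposition map $\pi=(\pi_1,\pi_2)$ defined in (\ref{e:IsomorphismDecomposition}), and then to reduce the equivalence claimed in the proposition to two classical measure--theoretic facts.

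The first fact is that, for a measure--preserving action of a countable group $G$ on a standard probability space $(X,\mf B,D)$, the Borel map $r\mapsto D_r^G$ to the space of $G$--invariant ergodic measures generates (mod $D$) precisely the invariant sigma algebra $\mf B^G$; this is a standard consequence of the pointwise ergodic theorem mentioned in Section~\ref{s:BasicDefinitions} in connection with de Finetti. Applying it to the commuting actions of the countable groups $G_1=S_\N\times\{1\}$ and $G_2=\{1\}\times S_\N$ gives $\sigma(\pi_1)=\mf B^{G_2}$ and $\sigma(\pi_2)=\mf B^{G_1}$ modulo $D$, hence
\[
\sigma(\pi) \;=\; \sigma(\pi_1)\vee\sigma(\pi_2) \;=\; \mf B^{G_1}\vee\mf B^{G_2} \pmod{D}.
\]
By the very definition of simplicity, this means that $D$ is simple if and only if $\sigma(\pi)$ coincides with the entire Borel algebra $\mf B$ modulo $D$.

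The second fact is standard in the theory of Lebesgue spaces: a Borel map $\pi$ between standard probability spaces is one--to--one $(\bmod\,0)$ if and only if $\sigma(\pi)$ coincides $(\bmod\,0)$ with the whole Borel algebra of its domain. Combining the two equivalences proves the proposition, once we note that the bijections $\nu_r\leftrightarrow\nu_r^\N$ and $\mu_r\leftrightarrow\mu_r^\N$ are tautologically injective and therefore do not affect the injectivity question.

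I expect the only mildly delicate point to be the identity $\sigma(\pi_i)=\mf B^{G_{3-i}}\pmod{D}$, which is a standard but non--formal fact about ergodic decompositions for countable group actions; everything else in the argument is bookkeeping. No substantial computation is required, and both directions of the claimed equivalence follow uniformly from the chain $\pi\text{ injective}(\bmod\,0) \Leftrightarrow \sigma(\pi)=\mf B \Leftrightarrow \mf B^{G_1}\vee\mf B^{G_2}=\mf B \Leftrightarrow D$ is simple.
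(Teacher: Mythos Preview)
Your argument is correct and is essentially the same as the paper's: the paper does not give a separate proof of this proposition but rather treats it as an immediate consequence of the preceding discussion, in which the map $\pi=(\pi_1,\pi_2)$ is set up, identified with $r\mapsto(\nu_r,\mu_r)$ via de Finetti, and observed to be one--to--one $(\bmod\,0)$ precisely when the invariant algebras $\mf B^{G_1}$ and $\mf B^{G_2}$ together generate $\mf B$. You have simply made explicit the two standard facts (that $\sigma(\pi_i)=\mf B^{G_{3-i}}$ and that a Borel map on a Lebesgue space is injective $\bmod\,0$ iff it generates the full algebra) which the paper takes for granted.
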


With help of the decomposition of the group action we are able to proof the following lemma.

\begin{lem}\label{l:sev:simple}
Assume that $D=D_f$ is the matrix distribution of a pure function $f:(X,\mu)\times (Y,\nu)\longrightarrow Z$.
Then $D$ is simple if and only if the congruence group $K_f$ is trivial.
\end{lem}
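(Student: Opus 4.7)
By Corollary \ref{c:sev:aut}, $K_f$ is trivial precisely when $F_f: X^\N\times Y^\N \to Z^{\N\times\N}$ is one-to-one ($\bmod\,0$), and by the preceding proposition $D$ is simple precisely when $\pi: r\mapsto (\nu_r,\mu_r)$ is one-to-one ($\bmod\,0$). The structural observation driving the argument is that the composition $\pi\circ F_f$ splits as a product of two maps, one in each argument: for $\mu^\N\times\nu^\N$-typical $((x_i),(y_j))$ one has
\[
\nu_r = \nu\circ L_{(x_i)}^{-1} =: \phi_X\bigl((x_i)\bigr), \qquad \mu_r = \mu\circ L_{(y_j)}^{-1} =: \phi_Y\bigl((y_j)\bigr),
\]
so $\pi\circ F_f((x_i),(y_j)) = \bigl(\phi_X((x_i)),\phi_Y((y_j))\bigr)$ and the fibers of $\pi\circ F_f$ are rectangles $\phi_X^{-1}(\alpha)\times\phi_Y^{-1}(\beta)$.

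For the implication $K_f$ trivial $\Rightarrow D$ simple, Corollary \ref{c:sev:aut} makes $F_f$ an $(\S_\N\times\S_\N)$-equivariant measure-space isomorphism ($\bmod\,0$), so it intertwines the $G_i$-invariant Borel algebras on source and target. On $X^\N\times Y^\N$, de Finetti ergodicity of $\mu^\N$ combined with Fubini identifies $\mf B^{G_1}$ ($\bmod\,0$) with the algebra pulled back from $Y^\N$, and symmetrically for $\mf B^{G_2}$; their join is trivially the full product algebra. Transferring through $F_f$ gives $\mf B^{G_1}\vee\mf B^{G_2} = \mf B$ ($\bmod\,D$), which is exactly simplicity.

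For the harder converse, assume $D$ is simple and suppose toward a contradiction that some $(S,T)\in K_f$ has $S\neq\id$ (the case $T\neq\id$ is symmetric). The defining identity $f(Sx,Ty)=f(x,y)$ together with the substitution $y\mapsto T^{-1}y$ and measure-preservation of $T$ gives $L_{(Sx_i)}=L_{(x_i)}\circ T^{-1}$ ($\bmod\,0$), hence $\phi_X((Sx_i))=\phi_X((x_i))$ for $\mu^\N$-a.e.\ $(x_i)$. Consequently the ``mixed'' point $((Sx_i),(y_j))$ has the same $\pi\circ F_f$-image as $((x_i),(y_j))$. Since $\pi$ is one-to-one on a set of full $D$-measure, the fiber of $\pi\circ F_f$ through a typical point coincides with the fiber of $F_f$ there, so $F_f((Sx_i),(y_j))=F_f((x_i),(y_j))$, i.e.\ $f(Sx_i,y_j)=f(x_i,y_j)$ for all $i,j$. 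By Lemma \ref{l:sev:pure}(\ref{i:pure sequencial}), for $\nu^\N$-typical $(y_j)$ the map $L_{(y_j)}:X\to[0,1]^\N$ is one-to-one on a set of full $\mu$-measure, so this identity forces $Sx_i=x_i$ for every $i$ on a $\mu^\N$-full-measure set of sequences, and hence $S=\id$ in $\Aut_0(X,\mu)$---a contradiction.

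The main obstacle is the null-set bookkeeping in the converse direction: one must verify that all the relevant ``typicality'' conditions (typicality of $(x_i)$ and $(y_j)$, $\pi$-injectivity at the corresponding matrix $r$, injectivity of $L_{(y_j)}$, and validity of the identity $\phi_X\circ S^\N=\phi_X$) jointly hold on a set of full measure, and that on this common set the conclusion $Sx_i=x_i$ \emph{for every} $i$ can be promoted from one typical sequence to $S=\id$ everywhere by varying the sequence---which is immediate from $\mu^\N(\{x: Sx = x\}^\N) = \mu(\{x: Sx=x\})^\infty$.
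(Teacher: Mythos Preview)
Your argument is correct, and the forward direction ($K_f$ trivial $\Rightarrow$ $D$ simple) is essentially identical to the paper's: both transfer the trivial identity $\mf B^{G_1}\vee\mf B^{G_2}=\mf B$ on $X^\N\times Y^\N$ through the equivariant isomorphism $F_f$.

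For the converse you take a genuinely different route. The paper proceeds \emph{directly}: once one observes (as you also do) that $\pi\circ F_f=(\phi_1,\phi_2)$ with $\phi_1:X^\N\to\mf X$ and $\phi_2:Y^\N\to\mf Y$, it simply notes that for $\nu^\N$--almost every $(y_j)$ the map $(x_i)\mapsto F_f\big((x_i),(y_j)\big)=\big(L_{(y_j)}(x_i)\big)_i$ is one--to--one ($\bmod\,0$) by Lemma~\ref{l:sev:pure}\itemref{i:pure sequencial}; since this map factors through $\phi_1$, the latter is one--to--one. Symmetrically for $\phi_2$, hence $F_f$ is one--to--one ($\bmod\,0$), and Corollary~\ref{c:sev:aut} finishes. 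Your version instead argues by contradiction from a specific $(S,T)\in K_f$: you exploit the identity $\phi_X\circ S^\N=\phi_X$ (via $L_{(Sx_i)}=L_{(x_i)}\circ T^{-1}$ and measure--preservation of $T$), then use injectivity of $\pi$ to pull back to an equality of matrices, and finally Lemma~\ref{l:sev:pure}\itemref{i:pure sequencial} to force $S=\id$. Both arguments rest on the same two ingredients---the product structure of $\pi\circ F_f$ and the injectivity of $L_{(y_j)}$---but the paper's is shorter and avoids the null--set bookkeeping you flag as the ``main obstacle''. Your route has the minor advantage that it does not invoke Corollary~\ref{c:sev:aut} for the converse direction, addressing $K_f$ directly.
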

\begin{proof}
First of all note that
by Corollary \ref{c:sev:aut}, we only have to show that a measure $D$ is simple if and only if the map
\[
F_f: (X^\N,\mu^\N)\times (Y^\N,\nu^\N)\longrightarrow(Z^{\N\times\N},D)
\]
is one--to--one  on a set of full measure.
One direction is trivial:
if $F_f$ is one--to--one, it is an equivariant isomorphism between the measure spaces.
Hence
\begin{equation}
\label{e:simplemeasure}
\mf B^{S_\N\times \{1\}}\vee \mf B^{ \{1\}\times S_\N} = \mf B \quad (\bmod\, D),
\end{equation}
as the same assertion is true for the corresponding invariant algebras in the space $X^\N\times Y^\N$.

Conversely, assume that \eqref{e:simplemeasure} holds.
By the equality
\[
\big(Z^{\N\times\N},D,G_1\times G_2\big) =(\mf X,\mu_{\mf X},G_1)\times(\mf Y,\nu_{\mf Y},G_2)
\]
we thus may regard $F_f$ as mapping
\[
(X^\N,\mu^\N, G_1) \times (Y^\N,\nu^\N, G_2) \longrightarrow (\mf X,\mu_{\mf X},G_1)\times (\mf Y,\nu_{\mf Y},G_2).
\]
By equivariance, the preimage $F_f^{-1}(B)$ of any $G_i$--invariant Borel set $B\subseteq \mf X\times\mf Y$ is also invariant, for any $i=1,2$.
Thus $F_f$ must be of the form
\[
F_f = (\phi_1,\phi_2),
\]
with $\phi_1: X^\N\longrightarrow \mf X$ and $\phi_1: Y^\N\longrightarrow \mf Y$.
By Lemma \ref{l:sev:pure} we know that for $\nu^\N$--almost every sequence $(y_j)$ the restriction
$F_f(\:.\:,(y_j))$
is one--to--one ($\bmod\, 0$) and so is $\phi_1$.
For the same reasoning the function $\phi_2$ is one--to--one ($\bmod\, 0$).
This proves that $F_f=(\phi_1,\phi_2)$ is one--to--one on a set of full measure.
\end{proof}

\subsection{General canonical model given the matrix distribution of a completely pure function}
Assume that $D$ is the matrix distribution of a \emph{completely pure function} $f$, that is a pure function $f$ with trivial congruence group $K_f$.
Then by Lemma \ref{l:sev:simple} the measure $D$ is simple and hence the decomposition
\[
(Z^{\N\times\N}, D,\S_\N\times\S_\N) = \big(\mf X,\mu_{\mf X},G_1\big) \times \big(\mf Y,\nu_{\mf Y}, G_2 \times\S_\N\big)
\]
described by \eqref{e:decomposition}, \eqref{e:ComponentX}, \eqref{e:ComponentY} together with its
isomorphism
\[
\pi: Z^{\N\times\N}\longrightarrow \mf X\times \mf Y
\]
from \eqref{e:IsomorphismDecomposition} gives us another possibility to reconstruct the function $f$:
We simply set
\begin{equation*}
\mf f:\mf X\times \mf Y\longrightarrow Z, \quad \mf f =  r_{1,1}\circ \pi^{-1},
\end{equation*}
and claim that it is isomorphic to the function
\[
f:X^\N\times Y^\N\longrightarrow Z
\]
regarded as function on the first coordinates $x_1$ and $y_1$.
In fact, the mapping
\[
\Phi=\pi\circ F_f: X^\N\times Y^\N\longrightarrow \mf X\times \mf Y
\]
is one--to--one, measure preserving and obviously carries the function $f:X^\N\times Y^\N\longrightarrow Z$ to $\mf f$.
Thus the only thing we need to check to is that $\Phi$ is of product type, i.e. $\Phi=(\Phi_1,\Phi_2)$ with measurable functions $\Phi_1: X^\N\longrightarrow \mf X$ and $\Phi_2: Y^\N\longrightarrow\mf Y$.
But this is clear from the proof of Lemma \ref{l:sev:simple}.

We shall call the above constructed function $\mf f$ the \textit{general canonical model} of $f$, since it does not depend on the choice of a particular matrix $r$ as the individual canonical model from Section \ref{s:complete invariant}.
In contrast to the individual canonical model the function $\mf f$ is never pure as it is a model for $f$ as function on $X^\N\times Y^\N$ rather than as function $X\times Y\longrightarrow Z$.
Nevertheless its purification
\[
\bar{\mf f}: \mf X/\zeta_{\mf X} \times \mf Y/\zeta_{\mf Y}\longrightarrow Z
\]
as described in Section \ref{s:PureFunctions} is clearly isomorphic to the original pure function
\[
f:X\times Y\longrightarrow Z
\]
by the uniqueness of pure factors.
The direct connection between both models becomes clear in the proof of Theorem \ref{t:simple measures}.


Of course the above construction of  $\mf f$ makes sense for any $(\S_\N\times\S_\N)$--invariant simple measure on $Z^{\N\times\N}$.
This observation is the key for the following theorem.

\begin{thm}[Characterisation of simple measures using the general canonical model $\mf f$]
\label{t:simple measures}
Let $D$ be an $(\S_\N\times\S_\N)$--invariant and ergodic probability measure on the space of matrices $Z^{\N\times\N}$.
Then $D$ is a matrix distribution of a function $f:(X,\mu)\times (Y,\nu)\longrightarrow Z$ with trivial congruence group $K_f$ if and only if $D$ is a simple measure.
\end{thm}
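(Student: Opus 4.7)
The ``only if'' direction is immediate: by Lemma \ref{l:sev:simple}, the matrix distribution of a completely pure function is simple.

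For the ``if'' direction, assume $D$ is simple, and let $\bar{\mf f}:\mf X/\zeta_{\mf X}\times\mf Y/\zeta_{\mf Y}\to Z$ be the purification of the general canonical model $\mf f = r_{1,1}\circ\pi^{-1}$ introduced above. Three assertions have to be verified: (i) $D_{\mf f}=D_{\bar{\mf f}}$, (ii) $D_{\mf f}=D$, and (iii) $K_{\bar{\mf f}}$ is trivial. Statement (i) is routine: $\bar{\mf f}$ is a factor of $\mf f$ via the measure preserving projections $\pi_{\mf X},\pi_{\mf Y}$, whose coordinatewise application sends $\mu_{\mf X}^\N\times\nu_{\mf Y}^\N$ to the corresponding iid product for $\bar{\mf f}$ and hence preserves the induced matrix distribution. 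Statement (iii) is immediate from (ii): once $D_{\bar{\mf f}}=D$ is known to be simple, Lemma \ref{l:sev:simple} applied to the pure function $\bar{\mf f}$ gives that $K_{\bar{\mf f}}$ is trivial, so $\bar{\mf f}$ is completely pure as desired.

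The heart of the argument is therefore (ii). The equivariance of $\pi$ and the transpositions $\sigma_1^{(i)}\in G_1$, $\sigma_2^{(j)}\in G_2$ swapping $1\leftrightarrow i$ and $1\leftrightarrow j$ yield the pointwise identity
\[
r_{i,j} \;=\; \mf f\bigl(\sigma_1^{(i)}\mf x,\, \sigma_2^{(j)}\mf y\bigr)\qquad\text{a.s.,}\quad (\mf x,\mf y) = \pi(r),
\]
exhibiting $D$ as the pushforward of $\mu_{\mf X}\times\nu_{\mf Y}$ along the canonical lifting $(\mf x,\mf y)\mapsto\bigl(\mf f(\sigma_1^{(i)}\mf x,\sigma_2^{(j)}\mf y)\bigr)_{i,j}$, whereas $D_{\mf f}=(F_{\mf f})_*(\mu_{\mf X}^\N\times\nu_{\mf Y}^\N)$. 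We match the two pushforwards on arbitrary $n\times m$ blocks. On the $D$ side, the $G_2$--Bernoulli decomposition $D=\int\mf x^{\,\N}\,d\mu_{\mf X}(\mf x)$ writes the block as $m$ iid columns drawn from the $n$--dimensional marginal $(\mf x^*)_{[n]}$ of a single random column-measure $\mf x^*\sim\mu_{\mf X}$. On the $D_{\mf f}$ side, after conditioning on iid $\mf x_1,\ldots,\mf x_n\sim\mu_{\mf X}$ and averaging over iid $(\mf y_j)\sim\nu_{\mf Y}$, the block becomes $m$ iid columns from the random measure $\Psi_{(\mf x_i)}(A) = \nu_{\mf Y}\{\mf y:(\mf f(\mf x_i,\mf y))_i\in A\}$. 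A key consequence of the dual decomposition $D=\int\mf y^{\,\N}\,d\nu_{\mf Y}(\mf y)$ is that, conditional on $\mf y$, the values $(\mf f(\mf x_i,\mf y))_{i=1}^n$ for iid $\mf x_i\sim\mu_{\mf X}$ are themselves iid distributed as the first marginal $(\mf y)_1$ of $\mf y$. Combined with de Finetti's theorem applied to the $\S_\N$--equivariant marginal map $\mf x\mapsto ((\mf x)_k)_k\in\meas_1(Z)^\N$, whose image under the $\S_\N$--ergodic measure $\mu_{\mf X}$ is Bernoulli, this identifies the random measures $(\mf x^*)_{[n]}$ and $\Psi_{(\mf x_1,\ldots,\mf x_n)}$ in distribution, yielding $D_{\mf f}=D$.

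The main obstacle is the identification of these two descriptions of the $n\times m$ block: the condensed form of $D$ through a single pair $(\mf x,\mf y)=\pi(r)$ and the expanded iid form of $D_{\mf f}$. The subtlety is that $\mu_{\mf X}$ is not itself a Bernoulli measure on a product space; its $\S_\N$--ergodicity must be transported through the equivariant marginal map, and it has to be combined with the dual decomposition on the $\nu_{\mf Y}$ side --- which is only possible thanks to the simplicity hypothesis --- in order to bridge the two representations.
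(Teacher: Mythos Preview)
Your overall strategy---produce the general canonical model $\mf f$, pass to its purification $\bar{\mf f}$, and then invoke Lemma~\ref{l:sev:simple}---is sound, and steps (i) and (iii) are fine once (ii) is in hand.  The difficulty is that your argument for (ii) does not go through as written.

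The gap is in the sentence ``Combined with de Finetti's theorem applied to the $\S_\N$--equivariant marginal map \ldots this identifies the random measures $(\mf x^*)_{[n]}$ and $\Psi_{(\mf x_1,\ldots,\mf x_n)}$ in distribution.''  Your two ingredients do not combine to give this.  The conditional statement you establish, that for fixed $\mf y$ the values $(\mf f(\mf x_i,\mf y))_i$ are i.i.d.\ with law $(\mf y)_1$, is a statement about averaging over $\mf x$ with $\mf y$ held fixed; but $\Psi_{(\mf x_i)}$ is defined by fixing the $\mf x_i$ and averaging over $\mf y$, so the conditioning is in the wrong direction and there is no Fubini step that converts one into the other.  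Likewise, de Finetti for the marginal map $\mf x\mapsto((\mf x)_k)_k$ only tells you that the \emph{one--dimensional} marginals $(\mf x^*)_1,(\mf x^*)_2,\ldots$ are i.i.d.; it says nothing about the joint $n$--dimensional marginal $(\mf x^*)_{[n]}$, which is in general not a product of its one--dimensional pieces.  So neither ingredient, nor their combination, pins down the law of $(\mf x^*)_{[n]}$ or of $\Psi_{(\mf x_1,\ldots,\mf x_n)}$.

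The paper circumvents this block computation entirely.  The key step you are missing is that simplicity forces $(\mf X,\mu_{\mf X})$ itself to be a Bernoulli $G_1$--system: since $\pi=(\pi_1,\pi_2)$ is one--to--one $(\bmod\,0)$ and $\pi_2$ is $D_r^{G_1}$--a.e.\ constant, the map $\pi_1$ is one--to--one on $D$--a.e.\ $G_1$--ergodic component $D_r^{G_1}=\mu_r^{\N}$, and hence $(\mf X,\mu_{\mf X})\cong\big((Z^{\{1\}\times\N})^{\N},\mu_r^{\N}\big)$.  With this identification (and the symmetric one for $\mf Y$), the map $\pi^{-1}$ becomes an $(\S_\N\times\S_\N)$--equivariant map from a genuine Bernoulli product $X^{\N}\times Y^{\N}$ into $Z^{\N\times\N}$, and Lemma~\ref{l:sev:equivariance} forces it to be of the form $F_f$ for some $f:X\times Y\to Z$.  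Then $D=(\pi^{-1})_*(\mu_{\mf X}\times\nu_{\mf Y})=D_f$ in one stroke, and $F_f=\pi^{-1}$ being one--to--one gives $K_f$ trivial via Corollary~\ref{c:sev:aut}.  This is both shorter and avoids the block--matching problem altogether.
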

\begin{rem}
This theorem corrects Theorem 2 from \cite{Vershik FA}, which states that an $(S_\N\times S_\N)$--invariant and ergodic measure is a matrix distribution if and only if it is simple.
\end{rem}

\begin{rem}
Observe that the characterization of matrix distributions in the case of functions in just one argument becomes essentially de Finetti's theorem:
In this context, the \textit{row distribution} (instead of matrix distribution) $D_f$ of $f:X\longrightarrow Z$ is defined as the distribution of the process $(f(x_i))$ sampling its arguments independently according to the given measure $\mu$ on $X$.
Thus $D_f$ is simply the Bernoulli measure
$D=\left(\mu\circ f^{-1}\right)^\N$
on the space $Z^\N$.
De Finetti's theorem states that the $\S_\N$--invariant ergodic measure on $Z^\N$ are exactly the Bernoulli measures, and hence all such meausres are the row distribution of a function in one variable.
\end{rem}

\begin{proof}
One direction is already content of Lemma \ref{l:sev:simple}.
Conversely, let us assume that (\ref{e:sev:simple}) is satisfied.
As before the dynamical system $(\Z^{\N\times\N},D,\S_\N\times\S_\N)$ decomposes into the product
\[
(\mf X, \mu_{\mf X},G_1) \times (\mf Y,\nu_\mf Y,G_2)
\]
of the spaces of ergodic components
\begin{align*}
\left(\mf X,\mu_\mf X\right) &= \left(\meas_1\left(Z^{\N\times\{1\}}\right), D\circ \pi_1^{-1}\right),
\\
\left(\mf Y,\nu_\mf Y\right)  &= \left(\meas_1\left(Z^{\{1\}\times\N}\right),D\circ \pi_2^{-1}\right),
\end{align*}
modelled by the space of column and row measures, respectively.
Taking a closer look on these spaces, it is obvious that the ergodic decomposition mapping $\pi_1: r\mapsto D_r^{G_2}$ regarded as $G_1$--equivariant mapping
\[
\pi_1: \left(\Z^{\{1\}\times\N}\right)^\N\longrightarrow \mf X
\]
maps each of the $G_1$--invariant ergodic measures $D_r^{G_1}$ onto an $G_1$--invariant ergodic measure on $\mf X$.
As $\mu_\mf X$ is also an $G_1$--invariant ergodic measure, being the integral convex combination
\[
\mu_\mf X(B) = \int_{r} D_r^{G_1}\circ \pi_1^{-1}(B) \cdot dD(r)
\]
of such measures, it  therefore must equal to at least\footnote{it is not difficult to see that the set of all mesures $D_r^{G_1}$ which equal the ergodic measure $\mf X$ is of full measure.} one such pushforward measures $D_r^{G_1}\circ\pi_1^{-1}$.
By de Finetti's theorem we may regard the latter measure $D_r^{G_1}$ as Bernoulli measure $\mu_r^\N$ on the space of sequences of rows $\left(\Z^{\{1\}\times\N}\right)^\N$, and $\pi_1$ as measure preserving isomorphism between the measure spaces $\left(\Z^{\{1\}\times\N}\right)^\N$ and $\mf X$.
We thus may consider
\[
\left(\mf X,\mu_\mf X\right)=\left(\left(\Z^{\{1\}\times\N}\right)^\N,\mu_r^\N\right),
\]
and in the same way one sees that
\[
\left(\mf Y,\mu_\mf Y\right)=\left(\left(\Z^{\N\times\{1\}}\right)^\N,\nu_r^\N\right),
\]
that is the space of sequences of columns with Bernoulli measure $\nu_r^\N$.
We claim that the matrix distribution of the function
\[
\mf f:  \mf X\times \mf Y \longrightarrow Z, \quad \mf f= r_{1,1}\circ \pi^{-1},
\]
equals our measure $D$.
For brevity, we write  $X=Z^{\{1\}\times\N}$ and $Y=Z^{\N\times\{1\}}$ for the space of rows and the space of columns, respectively.
As the matrix valued function
\[
\mf F:\mf X\times \mf Y\longrightarrow Z^{\N\times\N},\quad \mf F =  \id\circ\pi^{-1},
\]
is equivariant with respect to the action of $G_1\times G_2$ so it is regarding it as function from $X^\N\times Y^\N$ to $Z^{\N\times\N}$.
From this it follows easily that $\mf F$ is of the form
\[
\mf F\big((x_i),(y_j)\big) = \big( \mf F_{1,1}(x_i,x_j)\big) \quad (\bmod\,  0),
\]
as is shown in the postponed Lemma \ref{l:sev:equivariance}.
This proves that the distribution of the matrix valued function $\mf F$, which by definition equals $D$, is the matrix distribution of  $\mf f$.
Moreover, the function $\mf F=F_{\mf f}$ is by construction one--to--one  ($\bmod\, 0$) and we conclude from Corollary \ref{c:sev:aut} that the congruence group of $\mf f$ is trivial.
\end{proof}

We close this section with the auxiliary lemma that we used in the proof of Theorem \ref{t:simple measures}.
\begin{lem}\label{l:sev:equivariance}
Suppose that the map $F$ from $(X^\N,\mu^\N)\times (Y^\N,\nu^\N)$ to $Z^{\N\times\N}$ is equivariant under the action of $S_\N\times S_\N$.
Then there exists a measurable function $f:X\times Y\longrightarrow Z$ such that
\begin{equation*}\label{e:sev:L07}
F=(F_{i,j}) = \big(f(x_i,x_j)\big) \quad (\bmod\,  0).
\end{equation*}
\end{lem}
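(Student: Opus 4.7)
The plan is to first show that the $(1,1)$--entry of $F$ depends (up to a null set) only on $(x_1,y_1)$, and then propagate the conclusion to every entry $(i,j)$ using equivariance under transpositions.

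First, I would examine $F_{1,1}:X^\N\times Y^\N\longrightarrow Z$. Let $H\subseteq \S_\N$ denote the stabilizer of the index $1$, which acts as the full infinite symmetric group on $\{2,3,\ldots\}$. The equivariance relation $F(g\cdot u)=g\cdot F(u)$ translates entry-wise to $F_{i,j}(g\cdot u)=F_{g_1^{-1}(i),g_2^{-1}(j)}(u)$; specialising to $(g_1,g_2)\in H\times H$ gives
\[
F_{1,1}\bigl(g_1\cdot(x_i),\,g_2\cdot(y_j)\bigr)=F_{1,1}\bigl((x_i),(y_j)\bigr),
\]
so $F_{1,1}$ is $(H\times H)$--invariant on the product probability space equipped with $\mu^\N\times\nu^\N$.

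Next, I would invoke de Finetti's theorem (equivalently, the Hewitt--Savage zero-one law) conditionally on $(x_1,y_1)$: the remaining coordinates $((x_i)_{i\geq 2},(y_j)_{j\geq 2})$ are i.i.d. with distribution $\mu^{\N\setminus\{1\}}\times \nu^{\N\setminus\{1\}}$, under which the action of $H\times H$ is ergodic. Hence for $(\mu\times\nu)$--almost every $(x_1,y_1)$ the function $F_{1,1}$ is essentially constant on the corresponding fiber, and a Fubini argument (or equivalently taking conditional expectation onto the $\sigma$--algebra generated by the first coordinates) produces a measurable function $f:X\times Y\longrightarrow Z$ with
\[
F_{1,1}\bigl((x_i),(y_j)\bigr)=f(x_1,y_1) \quad\text{a.e.}
\]

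Finally, for arbitrary $(i,j)$ I would apply equivariance with $g=(\sigma_{1,i},\sigma_{1,j})$, where $\sigma_{1,k}\in\S_\N$ denotes the transposition swapping $1$ and $k$. Since $\sigma_{1,k}^{-1}(1)=k$, this gives
\[
F_{i,j}\bigl((x_k),(y_l)\bigr)=F_{1,1}\bigl(g\cdot((x_k),(y_l))\bigr)=f(x_i,y_j) \quad\text{a.e.},
\]
and intersecting the countably many exceptional null sets over $(i,j)\in\N\times\N$ yields the desired identity on a single set of full measure. The only non-routine step is the appeal to de Finetti/Hewitt--Savage forcing $F_{1,1}$ to factor through $(x_1,y_1)$; everything else is a mechanical bookkeeping of the group action.
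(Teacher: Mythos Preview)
Your argument is correct and follows essentially the same route as the paper's own proof: both use invariance of $F_{1,1}$ under the stabilizer $S_\N^{(1)}\times S_\N^{(1)}$ of the index $1$ together with the Hewitt--Savage/de~Finetti zero--one law to reduce $F_{1,1}$ to a function of $(x_1,y_1)$, and then transport this to all entries via equivariance. The only difference is cosmetic---you spell out the Hewitt--Savage step explicitly and use specific transpositions in the last step, whereas the paper leaves the former implicit and uses general permutations.
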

\begin{proof}
Using equivariance we know that for any $g_1$ and $g_2$ from the subgroup $S_\N^{(1)}=\{g\in\S_\N:g(1)=1\}$ the following identity holds.
\[
\mf F_{1,1}\big(g_1(x_i),g_2(y_j)\big) = \mf F_{g_1^{-1} (1),g_2^{-1} (1)}\big((x_i),(y_j)\big) =\mf F_{1,1}\big((x_i),(y_j)\big).
\]
Being invariant with respect to the action of $S_\N^{(1)}\times S_\N^{(1)}$ the function $\mf F_{1,1}$ is ($\bmod\, 0$) measurable with respect to the first coordinates $x_1$ and $y_1$ of $X_r^\N\times Y_r^\N$ which means that $\mf F_{1,1}\big((x_i),(y_j)\big) = f(x_1,y_1)$ ($\bmod\, 0$) for some function $f$ defined on $X\times Y$.
Using once more equivariance we conclude that
\begin{multline*}
F_{g_1^{-1} (1),g_2^{-1} (1)}\big((x_i),(y_j)\big) =\\
=
F_{1,1}\left(\left(x_{g_1^{-1}(i)}\right),\left(y_{g_2^{-1}(j)}\right)\right) = f\left(x_{g_1^{-1}(1)},y_{g_2^{-1}(1)}\right)
\end{multline*}
($\bmod\, 0$) for every $g_1$ and $g_2$ from $S_\N$, which proves the assertion of the lemma.
\end{proof}

\section{The case of functions in more than two arguments}\label{s:more than two}

Let us shortly discuss the case of Borel functions
\[
f:\prod_{i=1}^n (X_i,\mu_i) \longrightarrow Z
\]
in more than two arguments.
Its \textit{tensor distribution} $D_f$ is defined as the distribution of the tensor--valued functional
\begin{align*}
F_f : \prod_{i=1}^n X_i^\N  &\longrightarrow Z^{\N^n},
\\
\left(\left(x_{i}^{(1)}\right), \left(x_{i}^{(2)}\right),\ldots, \left(x_{i}^{(n)}\right)\right) &\mapsto \left(f\left(x_{i_1}^{(1)},\ldots, x_{i_n}^{(n)}\right)\right)_{i_1,\ldots, i_n},
\end{align*}
under the measure $\prod_{i=1}^n \mu_i^\N$, i.e.
$D_f =\left(\prod_{i=1}^n \mu_i^\N\right)\circ F_f^{-1}$.

In this situation $D_f$ is a measure on the space of infinite tensors $Z^{\N^n}$ which is ergodic and invariant with respect to the (analogously defined action) of
\[
\S_\N^n = \S_\N\times\S_\N\times\ldots\times\S_\N
\]
acting independently on the indices of the tensors.
The notion of factors in the category of measurable functions in several variables is analogous to that of
functions in two arguments, and so is the definition of pureness:
\textit{%
a function $f(x_1,x_2,\ldots,x_n)$ is said to be pure, if it admits no true factor.
}
The reduction of the isomorphism problem to pure functions is then proved in exactly the same way as Theorem \ref{t:ReductionToPureCase}.
With help of an extended version of Lemma \ref{l:pure} it is also obvious how to prove the higher--dimensional analogue of Theorem \ref{t:sev:complete invariant}: \textit{Two pure measurable functions are isomorphic if and only if their tensor distributions coincide.}

The results from Section \ref{s:SimpleMeasures} are also extended easily:
Let $G_i$ be the permutation group which acts on the $i$-th index of the tensors only.
An $\left(S_\N^n\right)$--invariant measure $D$ on the space of tensors $Z^{\N^n}$ is said to be simple, if the invariant algebras $\mf B^{G^{(i)}}$ with respect to the groups
\[
G^{(i)} = G_1\times \cdots \times G_{i-1} \times \{1\} \times G_{i+1}\times \cdots \times G_n
\]
keeping the $i-th$ index fixed, generate the whole Borel algebra $\mf B$ of $Z^{\N^n}$, i.e.
\[
\bigvee_{i=1}^n \mf B^{G^{(i)}} = \mf B \quad (\bmod\, D).
\]
This again means that the dynamical system $(Z^{\N^n}, D, \prod_{i=1}^n G_i)$ is the direct product of the systems
$(Z^{\N^n}/ G^{(i)}, D, G_i)$, $1\leq i\leq n$, the quotients interpreted as ergodic decompositions.
With help of this decomposition, the general canonical model is defined analogously and  Theorem \ref{t:simple measures} can be proved similary:
\textit{An $S_\N^n$-invariant and ergodic measure on the space of tensors $Z^{\N^n}$ is the tensor distribution of measurable function $f:\prod_{i=1}^n (X_i,\mu_i)\longrightarrow Z$ with trivial congruence group
\begin{multline*}
K_f= \Big\{(T_i)_{i=1}^n \in \prod_{i=1}^n \Aut_0(X_i,\mu_i):
\\
f\big(T_1(x_1) ,\ldots, T_n(x_n)\big)=f(x_1,\ldots, x_n) \quad \text{a.e.} \Big\}
\end{multline*}
if and only it is simple.
}

\end{document}